\documentclass[11pt]{amsart}

\usepackage{amsmath,amssymb,amsthm,mathtools}
\usepackage{mathrsfs}
\usepackage{microtype}
\usepackage[hidelinks]{hyperref}
\hypersetup{
  pdftitle={Bismut-Torsion-Parallel Hermitian Manifolds with Constant Chern Holomorphic Sectional Curvature},
  pdfauthor={Haohao Wang },
  pdfkeywords={Hermitian manifold, Chern holomorphic sectional curvature, Bismut connection, parallel torsion, balanced metric, complex Lie algebra}
}
\usepackage{enumitem}

\allowdisplaybreaks
\numberwithin{equation}{section}

\newtheorem{theorem}{Theorem}[section]
\newtheorem{proposition}[theorem]{Proposition}
\newtheorem{lemma}[theorem]{Lemma}
\newtheorem{corollary}[theorem]{Corollary}
\newtheorem{conjecture}[theorem]{Conjecture}
\theoremstyle{definition}

\theoremstyle{remark}
\newtheorem{remark}[theorem]{Remark}

\newcommand{\C}{\mathbb C}
\newcommand{\R}{\mathbb R}
\newcommand{\Id}{\operatorname{Id}}
\newcommand{\End}{\operatorname{End}}
\newcommand{\Der}{\operatorname{Der}}
\newcommand{\Rad}{\operatorname{Rad}}
\newcommand{\ad}{\operatorname{ad}}
\newcommand{\tr}{\operatorname{tr}}
\newcommand{\pr}{\operatorname{pr}}
\newcommand{\Span}{\operatorname{span}}

\newcommand{\Lie}{\mathfrak g}
\newcommand{\rad}{\mathfrak r}
\newcommand{\der}{\mathfrak d}
\newcommand{\ip}[2]{\left\langle #1,#2\right\rangle}
\newcommand{\norm}[1]{\left\lVert #1\right\rVert}
\newcommand{\abs}[1]{\left\lvert #1\right\rvert}

\title[BALANCED BTP METRICS AND CONSTANT CHERN HSC]
{ BISMUT-TORSION-PARALLEL HERMITIAN MANIFOLDS WITH CONSTANT CHERN HOLOMORPHIC SECTIONAL CURVATURE}

\author{Haohao Wang}
\subjclass[2020]{53C55, 53C05, 17B30}
\keywords{Hermitian manifold, Chern holomorphic sectional curvature, Bismut connection, parallel torsion, balanced metric, complex Lie algebra}
\date{}

\begin{document}

\begin{abstract}
A well-known conjecture in complex geometry states that a compact Hermitian
manifold with constant Chern holomorphic sectional curvature must be
K\"ahler when the constant is nonzero and Chern flat when the constant is
zero.  The conjecture is known in complex dimension two and in several
special classes in higher dimensions.  For Hermitian metrics with
Bismut-parallel torsion, the non-balanced case and the balanced threefold
case were established by Chen--Zheng, while the balanced fourfold case was
settled recently by Wang--Zheng.  In this article, we prove the nonzero
case for balanced Bismut-torsion-parallel Hermitian manifolds in every
complex dimension. As a corollary, we prove that every BTP Hermitian manifold with nonzero constant Chern holomorphic sectional curvature is Kähler.
\end{abstract}

\maketitle

\section{Introduction}

Let $(M^n,J,g)$ be a Hermitian manifold of complex dimension $n\geq 2$.
We denote by $\nabla^c$ its Chern connection, by $R^c$ its Chern curvature,
and by $T=T^c$ its Chern torsion.  For a nonzero tangent vector
$X\in T^{1,0}M$, the Chern holomorphic sectional curvature is defined by
\begin{equation}\label{eq:HSC-definition}
 H^c(X)=\frac{R^c(X,\overline X,X,\overline X)}{\norm{X}^4}.
\end{equation}
When $g$ is K\"ahler, the curvature tensor has the usual K\"ahler
symmetries, so the values of $H^c$ determine the entire curvature tensor.
The complete K\"ahler manifolds with constant holomorphic sectional
curvature are precisely the complex space forms; their universal covers
are complex projective space, complex Euclidean space, or complex
hyperbolic space, equipped with a suitable scaling of the standard
metric.  For a non-K\"ahler Hermitian metric, however, $R^c$ generally
does not satisfy all K\"ahler symmetries, and $H^c$ determines only an
appropriate symmetrization of $R^c$.

A long-standing conjecture in non-K\"ahler geometry is the following.

\begin{conjecture}
\label{conj:constant-HSC}
Let $(M^n,J,g)$ be a compact Hermitian manifold with $n\geq 2$.  Suppose
that $H^c\equiv c$ is constant.  If $c\neq 0$, then $g$ is K\"ahler; if
$c=0$, then $g$ is Chern flat.
\end{conjecture}

The conclusion in the zero-curvature case cannot in general be strengthened
to K\"ahlerness: compact non-K\"ahler Chern-flat manifolds exist in
complex dimension at least three \cite{Boothby}.

The conjecture was established in complex dimension two by Balas and
Gauduchon for $c\leq 0$ and by Apostolov--Davidov--Mu\v{s}karov in the
remaining case \cite{Balas,BalasGauduchon,ADM}.  In higher
dimensions it remains open in general, although
several important special cases are known.  Tang treated Chern
K\"ahler-like metrics; Chen--Chen--Nie proved the nonpositive case for
locally conformally K\"ahler metrics; Zhou--Zheng obtained Chern flatness
for compact Hermitian threefolds with vanishing real bisectional
curvature; and Ma--Nie considered normal balanced threefolds with
nonpositive constant Chern holomorphic sectional curvature
\cite{Tang,CCN,ZhouZheng,MaNie}.  More recently, the nonpositive case for
compact balanced threefolds has been proved without the normality assumption
\cite{ChenLi}.

The present paper concerns Hermitian metrics whose Bismut connection has
parallel torsion.  Recall that the Bismut connection $\nabla^b$ is the
unique Hermitian connection whose torsion is totally skew-symmetric \cite{Bismut}.  A
Hermitian metric is called \emph{Bismut torsion parallel}, or \emph{BTP},
if
\[
 \nabla^b T^b=0,
\]
where $T^b$ denotes the Bismut torsion.  Equivalently, one has
$\nabla^bT=0$ for the Chern torsion.  The BTP class contains, among other examples,
Bismut K\"ahler-like metrics, Bismut-flat metrics, Vaisman metrics, and compact
quotients of certain complex Lie groups; see \cite{ChenZhengBTP,ZhaoZhengBTP}
and the references therein.

Chen and Zheng proved Conjecture~\ref{conj:constant-HSC} for all
non-balanced BTP metrics and, using the classification of Zhao--Zheng,
for balanced BTP threefolds \cite{ChenZhengBTP}.  Their work left the
balanced case in complex dimension at least four open.  Wang and Zheng
subsequently settled the balanced fourfold case by a dimension-specific
analysis \cite{WangZhengFourfold}.  Our main result
is the following arbitrary-dimensional statement.

\begin{theorem}\label{thm:main}
Let $(M^n,J,g)$ be a balanced Hermitian manifold of complex dimension
$n\geq 2$.  Assume that
\begin{equation}\label{eq:main-assumptions}
 \nabla^bT=0,
 \qquad
 H^c\equiv c\neq 0.
\end{equation}
Then $T=0$.  Consequently, $g$ is K\"ahler.
\end{theorem}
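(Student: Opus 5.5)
The strategy is to reduce everything to pointwise algebra of the torsion tensor, using the structure that the BTP condition forces on it, and then to read off a contradiction from the constant-$H^c$ identity unless $T=0$. Fix a point $p$ and a unitary frame $\{e_i\}$ of $T^{1,0}_pM$, and write the Chern torsion as $T(e_i,e_k)=\sum_j T^j_{ik}e_j$. Since $\nabla^bT=0$ and the Bismut connection preserves the metric and $J$, the Bismut curvature acts as a derivation that annihilates $T$. Moreover, the torsion components satisfy the algebraic identities known for BTP metrics (Zhao--Zheng): the tensor $T^j_{ik}$ behaves like the structure constants of an $n$-dimensional complex Lie algebra $\Lie$ equipped with a Hermitian inner product. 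Balancedness says $\sum_i T^i_{ik}=0$, i.e.\ $\tr\ad_X=0$ for every $X\in\Lie$, so $\Lie$ is unimodular. In the balanced BTP case one expects, further, that $\ad_X^*$ (the Hermitian adjoint) commutes suitably with the brackets. This is the structure behind the threefold classification.

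Next we express the Chern curvature through the Bismut curvature and $T$. Since $\nabla^c=\nabla^b+\gamma$ with $\gamma$ linear in $T$, and $\nabla^b T=0$, the derivative terms of $\gamma$ drop out, so $R^c-R^b$ is quadratic in $T$. The Bismut curvature of a BTP metric satisfies the Bismut Kähler-like type symmetry up to explicit $T*\overline T$ terms (Zhao--Zheng). Substituting, $H^c\equiv c$ becomes the statement that the symmetrization over $(X,X,\overline X,\overline X)$ of
\[
R^b(X,\overline X,X,\overline X)+Q(T)(X,\overline X,X,\overline X)
\]
equals $c\|X\|^4$, where $Q(T)$ is an explicit quadratic expression such as $\|[X,\cdot]\|^2$ and $|\langle [X,Y],Z\rangle|^2$-type terms.

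The key step is to take traces. Polarizing the identity $H^c(X)=c\|X\|^4$ and contracting gives a relation between the first, second and third Chern Ricci curvatures, the Chern scalar curvature $s$, and $|T|^2$. Because the metric is balanced, the first and second Chern scalar curvatures agree. Combined with $\nabla^bT=0$, which makes $|T|^2$ constant and makes the Bismut Ricci form expressible through $T$, this should give two independent linear relations. The first is $s=\tfrac{n(n+1)}{2}c+\alpha|T|^2$ from the symmetrized identity. The second is $s=\tfrac{n(n+1)}{2}c+\beta|T|^2$ from an identity obtained by applying the derivation property of $R^b$ to $T$ and contracting, which encodes the Lie-algebra structure, for example $\tr(\ad_X\ad_X^*)$ versus Killing-form terms. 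The coefficients differ by a multiple of $c$: the curvature operator acting on $T$ produces $cT$-type terms, since $R^b\cdot T=0$ while the constant-$H^c$ part of $R^b$ acts on $T$ by a nonzero multiple of $c$ (weights of a degree $(2,1)$ tensor under a space-form-like curvature). The outcome should be an identity of the form $c\,\lambda_n|T|^2=0$ with $\lambda_n\neq0$, which forces $T=0$ since $c\neq0$.

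I expect the main obstacle to be controlling the non-symmetric part of $R^b$. $H^c$ only sees the symmetrization, so we must show that the antisymmetric remainder pairs trivially with $T$ after contraction. I would first try to use the first Bianchi identity for $\nabla^b$ with parallel torsion, namely $\mathfrak S R^b=\mathfrak S\,T(T(\cdot,\cdot),\cdot)$, together with unimodularity from balancedness, to kill these cross terms. If that fails, I would decompose $\Lie$ into radical and Levi part. On a semisimple part, $c\neq0$ would be contradicted by a Killing-form sign. On a nilpotent radical, the constraint would propagate through the lower central series to force vanishing.
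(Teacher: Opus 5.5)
Your setup (torsion Lie algebra, balanced $\Leftrightarrow$ unimodular, $R^b(X,\overline Y)$ acting as derivations) matches the paper. The gap is the decisive step: you never derive an identity $c\,\lambda_n|T|^2=0$, and the mechanism you sketch does not produce one.

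The non-symmetric part of $R^b$ is not the difficulty. For BTP metrics, the Zhao--Zheng skew identity together with the polarized condition $H^c\equiv c$ determines $R^b$ completely from $c$ and $T$ (Proposition~\ref{prop:reconstruction}). Every Ricci-type or scalar curvature is then automatically a numerical combination of $c$ and $|T|^2$, so your ``first relation'' carries no information. The only genuine constraint is that each $R^b(e_i,\overline{e_j})$ is a derivation. That identity has the shape $c\,T+T*T*\overline T=0$, so any scalar you extract from it reads $c|T|^2=(\text{quartic in }T)$. A second relation $s=\frac{n(n+1)}2c+\beta|T|^2$ with $\alpha-\beta$ ``a multiple of $c$'' cannot arise. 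Concretely, contract in the Ricci way: take $S=\sum_iR^b(e_i,\overline{e_i})=\frac{n+1}2c\,\Id+\frac14(B-A)$ and pair $S\cdot T=0$ with $\overline T$. This gives $\tr(SB)=2\tr(SA)$, i.e.
\[
\frac{n+1}{2}\,c\,|T|^2=\frac14\tr\bigl[(A-B)(2A-B)\bigr],
\]
and the right-hand side has no sign and no reason to vanish.

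In fact Ricci-level information cannot suffice at all. Take $n=3$. The Heisenberg bracket $[e_2,e_3]=ae_1$ with $c=a^2/2$ gives $S=\operatorname{diag}(\frac32a^2,\frac34a^2,\frac34a^2)$. The unimodular solvable bracket $[e_1,e_3]=ae_1$, $[e_2,e_3]=-ae_2$ with $c=a^2/4$ gives $S=\operatorname{diag}(\frac34a^2,\frac34a^2,0)$. Both are derivations with the correct trace $\frac{n(n+1)}2c$, and $c\neq0$.

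The paper excludes these only with non-contracted operators:
\begin{itemize}
\item For $x\perp[\Lie,\Lie]$ it uses $R^b(x,\overline x)$. Every adjoint weight satisfies $\lambda\circ D=0$ for all derivations $D$, whereas $\lambda(R^b(x,\overline x)x)=c\|x\|^2\lambda(x)\neq0$.
\item For central $z$ it uses $R^b(z,\overline z)=\frac c2(\Id+P_z)+\frac14C_z$ twice: first to remove bracket components along $z$, then to kill the whole bracket.
\end{itemize}

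Your fallback does not supply these ideas. It omits the solvable non-nilpotent case entirely, since the radical is only solvable. ``Propagation through the lower central series'' is not an argument. A ``Killing-form sign'' on a Levi factor is unjustified, because the Levi complement need not be orthogonal to the radical and the bracket mixes the two. The paper instead uses that the induced derivation on $\Lie/\rad$ is traceless, $\tr_US=0$, together with the mixed component $R^b_{\alpha\bar a a\bar\alpha}=0$. This yields $2q(n+1)c=M+N$ and $2pqc=M+3N+2E$ in the notation of Section~\ref{sec:radical}.
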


We recently learned through personal communication that Professors Fangyang Zheng and Qingsong Wang have also independently arrived at the above result using different methods.

No compactness, completeness, homogeneity, special unitary frame, or
classification theorem is used in the proof.  Combining
Theorem~\ref{thm:main} with the non-balanced result of Chen--Zheng\cite{ChenZhengBTP}, where the proof process is pointwise, gives
the following consequence for the original compact conjecture. 

\begin{corollary}\label{cor:all-BTP}
Let $(M^n,J,g)$ be a BTP Hermitian manifold, not necessarily compact
or complete.  If its Chern
holomorphic sectional curvature is a nonzero constant, then $g$ is
K\"ahler.
\end{corollary}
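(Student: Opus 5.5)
The plan is to reduce Corollary~\ref{cor:all-BTP} to a dichotomy according to whether $g$ is balanced, the balanced condition being a pointwise one. Let $\eta$ denote the Gauduchon torsion $1$-form, so that locally $\eta=\sum_i T^i_{ik}\,\varphi^k$ in a unitary coframe. Then $g$ is balanced exactly when $\eta\equiv 0$. Because $\nabla^bT=0$, contractions of $T$ are also $\nabla^b$-parallel, so $\nabla^b\eta=0$ and in particular $|\eta|^2$ is locally constant. On each connected component of $M$ we therefore have a clean split: either $\eta\equiv 0$ there, or $\eta$ vanishes nowhere. Since Kählerness is local, it suffices to work on one connected component at a time.

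\emph{Balanced components.} On a component with $\eta\equiv0$ the metric is balanced and BTP with $H^c\equiv c\neq0$. Theorem~\ref{thm:main} needs no compactness or completeness, so it applies directly and gives $T=0$ on that component, i.e. $g$ is Kähler there.

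\emph{Non-balanced components.} On a component where $\eta$ vanishes nowhere, I will invoke the non-balanced theorem of Chen--Zheng \cite{ChenZhengBTP}. Their result says that a non-balanced BTP metric cannot have constant Chern holomorphic sectional curvature $c\neq0$. The point to verify is that their argument uses only the pointwise algebraic identities coming from $\nabla^bT=0$, such as the BTP structure equations, the Bismut curvature symmetries, and the expression of $R^c$ in terms of $R^b$ and $T$, together with $H^c\equiv c$. If so, it applies on an open set where $\eta\neq0$ with no global hypothesis, and yields a contradiction. Hence non-balanced components cannot occur when $c\neq0$.

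Combining the two cases, every component is balanced and therefore Kähler, which proves the corollary. The only step requiring care is confirming that the Chern--Zheng argument is genuinely pointwise, with no integration or compactness hidden in it, as the discussion after Theorem~\ref{thm:main} asserts. I would check this by tracing their proof: it should reduce to an algebraic contradiction at a single point with $\eta\neq0$, obtained by evaluating the symmetrized curvature identity $\widehat{R^c}=\frac{c}{2}(g\odot g)$ on vectors aligned with the dual of $\eta$. Such a contradiction is local, which is exactly what is needed.
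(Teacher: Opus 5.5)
Your proposal is correct and follows the paper's proof. You use the same dichotomy: balanced components are handled by Theorem~\ref{thm:main}, and non-balanced ones are excluded by the pointwise non-balanced result of Chen--Zheng \cite{ChenZhengBTP}. Your observation that $\nabla^b\eta=0$, so that $\eta$ is either identically zero or nowhere zero on each connected component, is a harmless refinement that the paper leaves implicit.
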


The proof of Theorem~\ref{thm:main} is pointwise.  Fix $p\in M$ and put
$V=T^{1,0}_pM$.  The BTP identities imply that
\[
 [x,y]:=T_p(x,y),\qquad x,y\in V,
\]
defines a complex Lie bracket.  Since $T$ is $\nabla^b$-parallel, every
Bismut curvature operator $R^b(X,\overline Y)$ is a derivation of this
Lie algebra.  Balancedness is precisely the unimodularity condition
$\tr(\ad_x)=0$.  Finally, the constant Chern holomorphic sectional
curvature assumption reconstructs the full Bismut curvature from $c$ and
quadratic expressions in the bracket.

The resulting finite-dimensional argument has three stages.  First, a
contracted curvature derivation and a mixed calculation along the solvable
radical exclude every nonzero semisimple quotient.  Thus the torsion Lie
algebra is solvable.  Second, Lie's theorem and a derivation invariance
property of the adjoint weights exclude the solvable non-nilpotent case.
Thus the algebra is nilpotent.  Third, a nonzero nilpotent Lie algebra has
nonzero center.  Curvature in a central direction first eliminates every
bracket component taking values in that central direction and then forces
the entire bracket to vanish.

The assumption $c\neq 0$ is essential for this argument. When $c=0$, non-K\"ahler Chern-flat BTP examples already exist in
complex dimension three \cite{ZhaoZhengThreefold}.  Although the present argument does not prove the full zero-curvature conjecture, its $c$-independent part still yields a structural description of the pointwise torsion Lie algebra.
In particular, the radical admits an orthogonal semisimple
complement, the center is orthogonal to the derived algebra,
and nilpotent torsion forces flat K\"ahlerness.

The paper is organized as follows.  Section~\ref{sec:preliminaries}
sets up the Hermitian and Lie-algebraic conventions and records the two
external BTP curvature identities.  Sections~\ref{sec:operators} and
\ref{sec:contracted} introduce two quadratic torsion operators and the
contracted curvature derivation.  Sections~\ref{sec:radical},
\ref{sec:weights}, and \ref{sec:center} carry out the three algebraic
stages described above.  The proof of the main theorem and several
consequences are given in Section~\ref{sec:conclusion}.

\section{Preliminaries}\label{sec:preliminaries}

\subsection{Chern torsion, Bismut curvature, and balancedness}

We extend $g$ complex bilinearly to $TM\otimes_{\R}\C$.  Let
$e=(e_1,\ldots,e_n)$ be a local unitary frame of $T^{1,0}M$, with dual
coframe $\varphi=(\varphi^1,\ldots,\varphi^n)$.  Thus
\[
 \ip{e_i}{\overline{e_j}}=\delta_{ij},
 \qquad
 \ip{e_i}{e_j}=0.
\]
We use the Chern torsion normalization
\begin{equation}\label{eq:torsion-normalization}
 T(e_i,e_k)=\sum_j T^j_{ik}e_j,
 \qquad
 T^j_{ik}=-T^j_{ki},
\end{equation}
so that the torsion forms are
\begin{equation}\label{eq:torsion-forms}
 \tau^j=\frac12\sum_{i,k}T^j_{ik}\,\varphi^i\wedge\varphi^k.
\end{equation}
This is the normalization used in the curvature formula of
Chen--Zheng.  It is twice the coefficient convention adopted in some
earlier papers on Hermitian curvature.  All sums of squared torsion
components below are ordered sums; in particular,
\begin{equation}\label{eq:T-norm}
 \abs{T}^2=\sum_{i,k,j}\abs{T^j_{ik}}^2.
\end{equation}

For the Bismut curvature we use the convention below; compare
\cite{YangZheng,ZhaoZhengBTP}.
\begin{equation}\label{eq:Bismut-curvature-components}
 R^b_{i\bar j k\bar\ell}
 =\ip{R^b(e_i,\overline{e_j})e_k}{\overline{e_\ell}},
\end{equation}
where
\[
 R^b(X,Y)=\nabla^b_X\nabla^b_Y-\nabla^b_Y\nabla^b_X-\nabla^b_{[X,Y]}.
\]
We write
\begin{equation}\label{eq:Dij-definition}
 D_{i\bar j}:=R^b(e_i,\overline{e_j})\big|_{T^{1,0}M}.
\end{equation}

Gauduchon's torsion $(1,0)$-form $\eta$ has components
\begin{equation}\label{eq:eta-components}
 \eta_k=\sum_iT^i_{ik}.
\end{equation}
If
\[
 \omega=\sqrt{-1}\sum_i\varphi^i\wedge\overline{\varphi^i}
\]
is the Hermitian form, then
\[
 g\text{ is balanced}
 \quad\Longleftrightarrow\quad
 \eta=0
 \quad\Longleftrightarrow\quad
 d\omega^{n-1}=0.
\]
This equivalence is standard; see, for example, \cite{Gauduchon}.

The BTP condition is usually written $\nabla^bT^b=0$.  Since the Bismut
and Chern torsions determine one another algebraically, this is equivalent to
\begin{equation}\label{eq:BTP-Chern-torsion}
 \nabla^bT=0.
\end{equation}
See \cite{ChenZhengBTP,ZhaoZhengBTP}.  We use the latter form throughout.

\subsection{Polarization of constant Chern holomorphic sectional curvature}

For a tensor $P_{i\bar j k\bar\ell}$, define its holomorphic sectional
symmetrization by
\begin{equation}\label{eq:symmetrization}
 \widehat P_{i\bar j k\bar\ell}
 =\frac14\bigl(
 P_{i\bar j k\bar\ell}
 +P_{k\bar j i\bar\ell}
 +P_{i\bar\ell k\bar j}
 +P_{k\bar\ell i\bar j}
 \bigr).
\end{equation}
The usual complex polarization argument gives the following statement;
compare \cite{Balas}.

\begin{lemma}\label{lem:polarization}
The condition $H^c\equiv c$ is equivalent to
\begin{equation}\label{eq:polarized-HSC}
 \widehat R^c_{i\bar j k\bar\ell}
 =\frac c2\bigl(\delta_{ij}\delta_{k\ell}
 +\delta_{i\ell}\delta_{kj}\bigr).
\end{equation}
Equivalently,
\begin{align}\label{eq:polarized-HSC-expanded}
 &R^c_{i\bar j k\bar\ell}
 +R^c_{k\bar j i\bar\ell}
 +R^c_{i\bar\ell k\bar j}
 +R^c_{k\bar\ell i\bar j}\notag\\
 &\hspace{35mm}=2c\bigl(\delta_{ij}\delta_{k\ell}
 +\delta_{i\ell}\delta_{kj}\bigr).
\end{align}
\end{lemma}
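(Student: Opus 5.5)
The plan is to work pointwise on $V=T^{1,0}_pM$ and use the quartic form $Q(X)=R^c(X,\overline X,X,\overline X)=\sum R^c_{i\bar jk\bar\ell}x_i\overline{x_j}x_k\overline{x_\ell}$, where $X=\sum_i x_ie_i$. The condition $H^c\equiv c$ says exactly that $Q(X)=c\norm{X}^4=c\bigl(\sum_i\abs{x_i}^2\bigr)^2$ for all $X\in V$. Both sides are polynomials in $x$ and $\overline x$, bihomogeneous of bidegree $(2,2)$. Such a polynomial identity on $\C^n$ holds if and only if the coefficients agree after symmetrizing in the holomorphic slots and, separately, in the antiholomorphic slots. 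The reason is that $x$ and $\overline x$ may be treated as independent variables: a real-analytic identity on $\C^n$ extends to the complexification.

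For the forward direction, I first symmetrize the left side. The monomial $x_i\overline{x_j}x_k\overline{x_\ell}$ is invariant under $i\leftrightarrow k$ and under $j\leftrightarrow\ell$. Hence the coefficient tensor may be replaced by its average over the group $\Z_2\times\Z_2$ acting by $i\leftrightarrow k$ and $j\leftrightarrow\ell$. Applied to $R^c$, this average is exactly $\widehat R^c$ of \eqref{eq:symmetrization}.

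Next I symmetrize the right side. We have $c\norm{X}^4=c\sum\delta_{ij}\delta_{k\ell}\,x_i\overline{x_j}x_k\overline{x_\ell}$. The $\Z_2\times\Z_2$-average of $\delta_{ij}\delta_{k\ell}$ is $\tfrac12(\delta_{ij}\delta_{k\ell}+\delta_{i\ell}\delta_{kj})$.

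Both averaged tensors are symmetric in $(i,k)$ and in $(j,\ell)$. So by uniqueness of coefficients of symmetric bihomogeneous forms, $Q(X)\equiv c\norm{X}^4$ implies \eqref{eq:polarized-HSC}. That uniqueness is the one genuine point to justify, and I expect it to be the main (though mild) obstacle. A form $\sum S_{i\bar jk\bar\ell}x_i\overline{x_j}x_k\overline{x_\ell}$ with $S$ symmetric in $(i,k)$ and in $(j,\ell)$ that vanishes for all $x\in\C^n$ must have $S=0$. To see this, apply $\partial_{x_i}\partial_{x_k}\partial_{\overline{x_j}}\partial_{\overline{x_\ell}}$ using Wirtinger derivatives, which treat $x$ and $\overline x$ independently; this returns a positive multiple of $S_{i\bar jk\bar\ell}$ by the symmetry. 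Alternatively, one can substitute $X=e_i+e^{\sqrt{-1}\theta}e_k$-type combinations and extract Fourier coefficients.

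The converse is immediate: contract \eqref{eq:polarized-HSC} with $x_i\overline{x_j}x_k\overline{x_\ell}$. Since the monomial is unchanged by the symmetrization, the left side gives $Q(X)$ and the right side gives $c\norm{X}^4$. Dividing by $\norm{X}^4$ for $X\ne0$ yields $H^c\equiv c$. Finally, \eqref{eq:polarized-HSC-expanded} is just \eqref{eq:polarized-HSC} multiplied by $4$, using the definition \eqref{eq:symmetrization}.
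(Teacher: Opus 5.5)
Your proposal is correct and follows essentially the same route as the paper. Both reduce $H^c\equiv c$ to the quartic identity $\sum R^c_{i\bar jk\bar\ell}x_i\overline{x_j}x_k\overline{x_\ell}=c\bigl(\sum_i\lvert x_i\rvert^2\bigr)^2$, observe that only the $(i\leftrightarrow k,\ j\leftrightarrow\ell)$-average $\widehat R^c$ contributes, and polarize. Your Wirtinger-derivative uniqueness argument, together with the explicit averaging of $\delta_{ij}\delta_{k\ell}$, just spells out the polarization step that the paper states in one line.
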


\begin{proof}
For $X=\sum_i x_ie_i$, the identity $H^c(X)=c$ is equivalent to
\[
 \sum_{i,j,k,\ell}R^c_{i\bar j k\bar\ell}
 x_i\overline{x_j}x_k\overline{x_\ell}
 =c\left(\sum_i\abs{x_i}^2\right)^2.
\]
Only the symmetrization in \eqref{eq:symmetrization} contributes to the
left-hand quartic form.  Polarizing this Hermitian quartic identity gives
\eqref{eq:polarized-HSC}.  Conversely, substituting the same vector in
all four entries immediately recovers $H^c\equiv c$.
\end{proof}

\subsection{The pointwise torsion Lie algebra}

The first external input is the following quadratic identity for BTP
metrics \cite[Proposition~1.5]{ZhaoZhengBTP}.

\begin{proposition}\label{prop:BTP-Jacobi}
On a BTP Hermitian manifold, the Chern torsion satisfies
\begin{equation}\label{eq:BTP-Jacobi}
 \sum_r\left(
 T^r_{ij}T^\ell_{rk}
 +T^r_{jk}T^\ell_{ri}
 +T^r_{ki}T^\ell_{rj}
 \right)=0
\end{equation}
for all $i,j,k,\ell$.
\end{proposition}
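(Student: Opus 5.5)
We would prove Proposition~\ref{prop:BTP-Jacobi} through the Bismut torsion $3$-form $H:=g(T^b(\cdot,\cdot),\cdot)$, combining two facts: a general identity for connections with parallel skew torsion, and the type decomposition of $H$ coming from the complex structure.

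First, for any metric connection with totally skew torsion $H$ satisfying $\nabla^b H=0$, one has the standard identity
\[
 dH=2\sigma_H,\qquad \sigma_H(X,Y,Z,W)=\mathfrak S_{X,Y,Z}\, g\bigl(T^b(X,Y),T^b(Z,W)\bigr),
\]
where $\mathfrak S$ is the cyclic sum. It follows from writing $dH$ as the full antisymmetrization of $\nabla^b H$: the Levi-Civita connection differs from $\nabla^b$ by $\tfrac12 T^b$, so the correction terms are quadratic in $T^b$ and assemble into $\sigma_H$. Second, for a Hermitian metric $H=\pm J d\omega = \pm d^c\omega$ (the sign depends on convention). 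Hence $dH=\mp dd^c\omega$ is proportional to $\sqrt{-1}\partial\bar\partial\omega$, which is a pure $(2,2)$-form. In particular, the $(3,1)$-component of $dH$ vanishes, and therefore so does the $(3,1)$-component of $\sigma_H$.

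Next we would evaluate $\sigma_H(e_i,e_j,e_k,\overline{e_\ell})$ in terms of the Chern torsion with the normalization \eqref{eq:torsion-normalization}. The Bismut torsion satisfies $T^b(e_i,e_j)=T(e_i,e_j)$, up to a fixed constant for the chosen normalization, so $T^b(e_i,e_j)$ is of type $(1,0)$. Consequently
\[
 g\bigl(T^b(e_i,e_j),T^b(e_k,\overline{e_\ell})\bigr)=\sum_r T^r_{ij}\,H(e_k,\overline{e_\ell},e_r).
\]
Total skew-symmetry gives $H(e_k,\overline{e_\ell},e_r)=-H(e_k,e_r,\overline{e_\ell})$, which equals $-\langle T(e_k,e_r),\overline{e_\ell}\rangle=-T^\ell_{kr}=T^\ell_{rk}$. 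Taking the cyclic sum over $(i,j,k)$ then yields exactly the left-hand side of \eqref{eq:BTP-Jacobi}, up to a nonzero constant. Since this $(3,1)$-component vanishes, \eqref{eq:BTP-Jacobi} follows.

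The main obstacle is the bookkeeping of constants and signs. We must fix the precise relation between $T^b$ and $T$ in the normalization \eqref{eq:torsion-normalization}, the sign in $H=\pm d^c\omega$, and the factor in $dH=2\sigma_H$. None of these affects the vanishing, because the identity is homogeneous. What does matter is that all terms of $\sigma_H$ of type $(3,1)$ share the same structure. In the cyclic sum the $\overline{e_\ell}$ slot is never cycled, so each term has this form, and no mixed contributions such as $g(T^b(e_i,\overline{e_\ell}),\cdot)$ arise. We would check this carefully. Alternatively, one could apply $d^2=0$ to the Chern structure equations using $\nabla^bT=0$, but the $dH$ route is shorter and makes clear where parallelism enters: it removes the $\nabla^b H$ terms from $dH$.
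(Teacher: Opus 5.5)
Your argument is correct. It takes a different route from the paper, which gives no proof of this identity and simply imports it from \cite[Proposition~1.5]{ZhaoZhengBTP}.

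Your proof is invariant and self-contained. Writing $\nabla^b=\nabla^{LC}+\frac12T^b$ gives that $dH$ equals the alternation of $\nabla^bH$ plus $2\sigma_H$, so the BTP condition yields $dH=2\sigma_H$. Separately, $(dH)^{3,1}=\partial H^{2,1}$ vanishes for every Hermitian metric, since $H^{3,0}=0$ and $H^{2,1}$ is a constant multiple of $\partial\omega$. Evaluating $\sigma_H$ on $(e_i,e_j,e_k,\overline{e_\ell})$ then gives exactly the left-hand side of \eqref{eq:BTP-Jacobi}.

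Two bookkeeping remarks, neither of which affects the conclusion. First, among Hermitian connections, total skew-symmetry actually forces $T^b(e_i,e_j)=-T(e_i,e_j)$. This sign enters both factors of every term of $\sigma_H$, so it cancels, as your homogeneity remark anticipates. Second, the terms of $\sigma_H$ do contain the mixed torsion $T^b(e_k,\overline{e_\ell})$ in the second factor. You handle it correctly: pairing with the $(1,0)$-vector $T^b(e_i,e_j)$ only sees $H(e_k,\overline{e_\ell},e_r)$, which skew-symmetry turns into a $(2,0)$ torsion component.

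Your route buys transparency. It identifies the identity as the statement $(\sigma_H)^{3,1}=0$, shows exactly where parallelism enters (it kills the $\nabla^bH$ terms), and makes the normalization-independence noted in the paper's remark evident. The same framework, through the Bianchi identity $\mathfrak S R^b=\sigma_H$ for parallel skew torsion, would also produce the pair symmetry and the skew formula for $R^b$ that the paper imports in the proof of Proposition~\ref{prop:reconstruction}. The citation, by contrast, lets the paper take all its BTP identities from one source already written in its unitary-frame conventions.
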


\begin{remark}
Equation~\eqref{eq:BTP-Jacobi} is homogeneous of degree two in the
torsion coefficients.  Hence a uniform factor change in the torsion
normalization does not affect the identity, although such a change does
affect the numerical coefficients in the curvature reconstruction
formula below.
\end{remark}

Fix $p\in M$ and set $V=T^{1,0}_pM$.  Define
\begin{equation}\label{eq:torsion-bracket}
 [x,y]_p:=T_p(x,y),\qquad x,y\in V.
\end{equation}

\begin{corollary}\label{cor:torsion-Lie-algebra}
The operation \eqref{eq:torsion-bracket} defines a complex Lie algebra
structure on $V$.
\end{corollary}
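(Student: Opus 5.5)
We have to check three things: that the bracket \eqref{eq:torsion-bracket} is complex bilinear, that it is skew-symmetric, and that it satisfies the Jacobi identity.

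Bilinearity is immediate. The Chern torsion $T_p$ restricted to $T^{1,0}_pM\times T^{1,0}_pM$ is a complex bilinear map with values in $T^{1,0}_pM$. The Chern torsion has no $(1,1)$ part and its $(2,0)$ part takes values in $T^{1,0}$, so by \eqref{eq:torsion-normalization}
\[
 [x,y]_p=\sum_{i,k,j}x_iy_kT^j_{ik}e_j
\]
for $x=\sum_i x_ie_i$ and $y=\sum_k y_ke_k$.

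Skew-symmetry follows directly from $T^j_{ik}=-T^j_{ki}$.

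For the Jacobi identity we work in the basis. We compute
\[
 [[e_i,e_j],e_k]=\sum_r T^r_{ij}[e_r,e_k]=\sum_{r,\ell}T^r_{ij}T^\ell_{rk}e_\ell .
\]
Summing cyclically over $(i,j,k)$, the $e_\ell$-coefficient is exactly the left-hand side of \eqref{eq:BTP-Jacobi}, namely
\[
 \sum_r\bigl(T^r_{ij}T^\ell_{rk}+T^r_{jk}T^\ell_{ri}+T^r_{ki}T^\ell_{rj}\bigr).
\]
This vanishes by Proposition~\ref{prop:BTP-Jacobi}. Hence
\[
 [[e_i,e_j],e_k]+[[e_j,e_k],e_i]+[[e_k,e_i],e_j]=0
\]
for all basis vectors. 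The Jacobiator is trilinear, so this extends to all $x,y,z\in V$.

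The only point requiring care is the index matching. The first two slots of $T^\ell_{rk}$ are (bracket output, third argument), so the term $T^r_{ij}T^\ell_{rk}$ corresponds to $[[e_i,e_j],e_k]$ rather than $[e_k,[e_i,e_j]]$. Both orderings give the same cyclic sum up to an overall sign, by skew-symmetry, so the conclusion does not depend on this convention. The identity also holds in any unitary frame at $p$, hence the structure is frame-independent.
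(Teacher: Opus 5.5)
Your proof is correct and is essentially the paper's argument: bilinearity and skew-symmetry come from the type and skew-symmetry of the Chern torsion, and the Jacobi identity follows by identifying the $e_\ell$-coefficient of $[[e_i,e_j],e_k]+[[e_j,e_k],e_i]+[[e_k,e_i],e_j]$ with the left-hand side of \eqref{eq:BTP-Jacobi}. Your closing remark on frame-independence is harmless but unnecessary, since the bracket $T_p(x,y)$ is defined intrinsically.
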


\begin{proof}
Complex bilinearity and skew-symmetry follow from the type and
skew-symmetry of the Chern torsion.  In the basis $e_1,\ldots,e_n$, the
coefficient of $e_\ell$ in
\[
 [[e_i,e_j],e_k]+[[e_j,e_k],e_i]+[[e_k,e_i],e_j]
\]
is precisely the left-hand side of \eqref{eq:BTP-Jacobi}.
\end{proof}

We call
\[
 \Lie_p=(T^{1,0}_pM,T_p)
\]
the \emph{torsion Lie algebra} at $p$.

\subsection{Bismut curvature operators are derivations}

Recall that a complex-linear map $D:\Lie\to\Lie$ is a derivation if
\begin{equation}\label{eq:derivation-definition}
 D[x,y]=[Dx,y]+[x,Dy]
\end{equation}
for all $x,y\in\Lie$.  The derivations form a Lie algebra denoted by
$\Der(\Lie)$.

\begin{lemma}\label{lem:curvature-derivations}
Let $p\in M$ and $X,Y\in T^{1,0}_pM$.  If $\nabla^bT=0$, then
\begin{equation}\label{eq:curvature-is-derivation}
 R^b(X,\overline Y)\in\Der(\Lie_p).
\end{equation}
\end{lemma}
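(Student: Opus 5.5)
The argument uses only that a parallel tensor is annihilated by the curvature of the connection, via the Ricci identity.

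Regard $T$ as a section of $\Lambda^2(T^{1,0}M)^*\otimes T^{1,0}M$. The Bismut connection is Hermitian and preserves $J$, so it preserves the type decomposition. Hence $\nabla^b$, and therefore $R^b(X,\overline Y)$, acts on this bundle by the natural tensor extension. Under this extension the endomorphism $D:=R^b(X,\overline Y)|_{T^{1,0}_pM}$ acts on $T$ by
\[
 (D\cdot T)(x,y)=D\,T(x,y)-T(Dx,y)-T(x,Dy).
\]

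\textbf{Step 1 (Ricci identity).} For any tensor $S$ and vector fields $U,W$,
\[
 R^b(U,W)\cdot S=\nabla^b_U\nabla^b_WS-\nabla^b_W\nabla^b_US-\nabla^b_{[U,W]}S .
\]
Apply this with $S=T$ and with $U,W$ extending $X,\overline Y$ (complexified). Since $\nabla^bT=0$ identically, every term on the right vanishes. Therefore $R^b(X,\overline Y)\cdot T=0$ at $p$.

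I write the formula with the Lie bracket term deliberately, rather than with second covariant derivatives. This avoids any torsion correction: a term like $\nabla^b_{T^b(U,W)}$ would also vanish anyway, since $\nabla^bT=0$.

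\textbf{Step 2 (unwinding).} Evaluating $R^b(X,\overline Y)\cdot T=0$ on $x,y\in V$ gives
\[
 D[x,y]_p=[Dx,y]_p+[x,Dy]_p .
\]
This is exactly the derivation condition \eqref{eq:derivation-definition} for the bracket \eqref{eq:torsion-bracket}. Here $D$ is complex linear on $V$ because $\nabla^b$ commutes with $J$.

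\textbf{Main point to check.} The only subtle step is the claim that the curvature acts on $T$ as a derivation of tensor type. This holds because $\nabla^b$ acts on the tensor bundle through the Leibniz rule, and $T$ is a section of a $\nabla^b$-invariant subbundle, since $\nabla^b$ preserves $T^{1,0}$. No torsion–curvature Bianchi identities are needed.
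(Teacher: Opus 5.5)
Your proposal is correct and follows essentially the same route as the paper. Both arguments observe that the curvature of the connection induced by $\nabla^b$ on $\Lambda^2(T^{1,0}M)^*\otimes T^{1,0}M$ annihilates the parallel tensor $T$. Unwinding the tensorial action of $R^b(X,\overline Y)$ on $T$ then gives exactly the derivation identity for the torsion bracket.
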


\begin{proof}
The connection $\nabla^b$ induces a connection on
$\Lambda^2(T^{1,0}M)^*\otimes T^{1,0}M$.  Since $T$ is parallel, the
induced curvature annihilates $T$.  Hence, for $u,v\in T^{1,0}_pM$,
\begin{align*}
 0
 &=(R^b(X,\overline Y)\cdot T)(u,v)\\
 &=R^b(X,\overline Y)T(u,v)
   -T(R^b(X,\overline Y)u,v)
   -T(u,R^b(X,\overline Y)v).
\end{align*}
This is exactly \eqref{eq:derivation-definition} for the torsion bracket.
\end{proof}

\subsection{Balancedness and unimodularity}

For a complex Lie algebra $\Lie$, write
\[
 \ad_x(y)=[x,y].
\]
The algebra is called \emph{unimodular} if
\[
 \tr(\ad_x)=0\qquad\text{for every }x\in\Lie.
\]

\begin{lemma}\label{lem:balanced-unimodular}
A Hermitian metric is balanced if and only if every pointwise torsion Lie
algebra $\Lie_p$ is unimodular.
\end{lemma}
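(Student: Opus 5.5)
The plan is to compute $\tr(\ad_x)$ directly in a unitary frame and compare it with the Gauduchon form $\eta$. Everything is pointwise: balancedness is $\eta=0$ at every point, and unimodularity of each $\Lie_p$ is also a pointwise condition. So it suffices to prove that, at each $p\in M$,
\[
 \eta_k(p)=0 \ \text{for all } k
 \quad\Longleftrightarrow\quad
 \tr(\ad_x)=0 \ \text{for all } x\in\Lie_p.
\]
No BTP input should be needed; this is pure linear algebra on the Chern torsion.

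First fix a unitary frame $e_1,\dots,e_n$ at $p$ and compute the matrix of $\ad_{e_k}$. By \eqref{eq:torsion-normalization},
\[
 \ad_{e_k}(e_i)=[e_k,e_i]=T(e_k,e_i)=\sum_j T^j_{ki}e_j .
\]
Hence the $(j,i)$ entry of $\ad_{e_k}$ is $T^j_{ki}$. Taking the trace and using skew-symmetry gives
\[
 \tr(\ad_{e_k})=\sum_i T^i_{ki}=-\sum_i T^i_{ik}=-\eta_k
\]
by \eqref{eq:eta-components}.

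Next, the map $x\mapsto\tr(\ad_x)$ is complex linear on $V$. Writing $x=\sum_k x_ke_k$, we get
\[
 \tr(\ad_x)=-\sum_k x_k\eta_k=-\eta(x).
\]
Thus $\tr\circ\ad=-\eta_p$ as linear functionals on $V$. The trace form vanishes identically exactly when $\eta_p=0$.

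Finally, I would combine this with the standard equivalence, recorded above, that $g$ is balanced if and only if $\eta\equiv 0$. Letting $p$ range over $M$ then gives the lemma.

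The only real obstacle is bookkeeping with conventions. One must make sure the bracket's index ordering matches $\eta_k=\sum_i T^i_{ik}$; otherwise the argument produces $-\eta$ or $\eta$. Either sign is harmless, since only vanishing matters. It is also worth noting that $\eta$ does not depend on the chosen unitary frame, because it is intrinsically the trace functional.
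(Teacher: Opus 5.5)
Your proposal is correct and is essentially the paper's proof: you compute $\tr(\ad_{e_k})=\sum_i T^i_{ki}=-\eta_k$ in a unitary frame and combine this with the standard characterization of balancedness by $\eta=0$. Your explicit extension to all $x$ by linearity, giving $\tr\circ\ad=-\eta_p$, spells out a step the paper leaves implicit.
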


\begin{proof}
In a unitary frame,
\[
 \ad_{e_k}(e_i)=[e_k,e_i]=\sum_jT^j_{ki}e_j.
\]
Therefore
\[
 \tr(\ad_{e_k})=\sum_iT^i_{ki}
 =-\sum_iT^i_{ik}=-\eta_k.
\]
The conclusion follows from \eqref{eq:eta-components}.
\end{proof}

\subsection{Curvature reconstruction}

The second external geometric input reconstructs the full Bismut curvature
from the constant $c$ and the Chern torsion; it is the $t=0$ case of
\cite[Lemma~8]{ChenZhengBTP}.

\begin{proposition}[Chen--Zheng]\label{prop:reconstruction}
Assume that $(M^n,J,g)$ is BTP and that $H^c\equiv c$.  Then, under every
local unitary frame,
\begin{align}
 R^b_{i\bar j k\bar\ell}
={}&\frac c2\bigl(\delta_{ij}\delta_{k\ell}
       +\delta_{i\ell}\delta_{kj}\bigr)
 -\frac12\sum_rT^r_{ik}\overline{T^r_{j\ell}}
 \notag\\
&-\frac34\sum_r\left(
 T^j_{ir}\overline{T^k_{\ell r}}
 +T^\ell_{kr}\overline{T^i_{jr}}
 \right)
 \notag\\
&+\frac14\sum_r\left(
 T^\ell_{ir}\overline{T^k_{jr}}
 +T^j_{kr}\overline{T^i_{\ell r}}
 \right).
\label{eq:reconstruction}
\end{align}
\end{proposition}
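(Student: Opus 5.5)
This proposition is cited from \cite[Lemma~8]{ChenZhengBTP}, so the plan is to recover it along the lines of that argument. Three ingredients will be combined: the general relation between the Chern and Bismut curvatures, the BTP symmetries of $R^b$, and the polarized condition \eqref{eq:polarized-HSC-expanded}.

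\emph{Step 1: compare the two connections.} Write $\nabla^b=\nabla^c+\gamma$. Here $\gamma$ is an explicit tensor, linear in $T$ and $\overline T$; in a unitary frame its $(1,0)$-block is $\gamma_{ik}^{\;\ell}=T^\ell_{ik}\varphi^k-\overline{T^k_{i\ell}}\,\overline{\varphi^k}$, up to normalization. Differentiating gives
\[
R^c=R^b-d^{\nabla^b}\gamma+\gamma\wedge\gamma .
\]
The BTP condition $\nabla^bT=0$ kills every derivative term. What remains is
\[
R^c_{i\bar jk\bar\ell}=R^b_{i\bar jk\bar\ell}+Q_{i\bar jk\bar\ell}(T,\overline T),
\]
where $Q$ is an explicit quadratic expression in the torsion. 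This step is bookkeeping with the normalization \eqref{eq:torsion-normalization}. I would record $Q$ in exactly the four index shapes that appear in \eqref{eq:reconstruction}:
\[
\sum_r T^r_{ik}\overline{T^r_{j\ell}},\qquad \sum_r T^j_{ir}\overline{T^k_{\ell r}},\qquad \sum_r T^\ell_{kr}\overline{T^i_{jr}},\qquad \sum_r T^\ell_{ir}\overline{T^k_{jr}}.
\]

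\emph{Step 2: BTP symmetries of $R^b$.} For BTP metrics (see \cite{ZhaoZhengBTP}), $R^b$ has $(1,1)$ type. It also satisfies a "Kähler-like up to torsion" symmetry: the quantities
\[
R^b_{i\bar jk\bar\ell}-R^b_{k\bar ji\bar\ell},\qquad R^b_{i\bar jk\bar\ell}-R^b_{i\bar\ell k\bar j}
\]
are explicit quadratic torsion expressions. These come from the first Bianchi identity for a connection with parallel totally skew torsion, namely $\mathfrak S\,R^b(X,Y,Z,W)=\sigma_{T^b}$ together with $b(T^b)$-type terms. Combining them with Proposition~\ref{prop:BTP-Jacobi} simplifies the cyclic terms.

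\emph{Step 3: solve for $R^b$.} Substitute $R^c=R^b+Q$ into \eqref{eq:polarized-HSC-expanded}. This yields
\[
4\widehat{R^b}=2c\bigl(\delta_{ij}\delta_{k\ell}+\delta_{i\ell}\delta_{kj}\bigr)-4\widehat Q .
\]
By Step~2, each of the four terms of $\widehat{R^b}$ equals $R^b_{i\bar jk\bar\ell}$ plus a known quadratic torsion correction. Solving for $R^b_{i\bar jk\bar\ell}$ then gives $\frac c2(\cdots)$ plus a linear combination of the four contractions listed in Step~1. Collecting coefficients should produce $-\frac12$, $-\frac34$ (twice) and $\frac14$ (twice), as in \eqref{eq:reconstruction}.

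\emph{Main obstacle.} The main obstacle is Step~2 together with the coefficient bookkeeping. One must use Proposition~\ref{prop:BTP-Jacobi} to convert terms such as $\sum_r T^r_{ik}\overline{T^r_{j\ell}}$ arising from one swap into the shapes arising from the other swap. One must also check that no residual term survives outside the span of the four contractions. Balancedness is not needed here.

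As a safeguard, I would verify the final formula against two cases:
\begin{itemize}
\item the Kähler case $T=0$, where the formula must reduce to $R=\frac c2(\delta\delta+\delta\delta)$;
\item a Bismut-flat example with $c$ computed directly, for instance a Samelson metric on a compact Lie group.
\end{itemize}
Both checks pin down the coefficients and the factor-of-two torsion convention.
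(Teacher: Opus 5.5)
Your plan is correct and is essentially the paper's proof. The paper writes $\widehat R^b$ as $R^b$ plus quadratic torsion terms using the BTP pair symmetry $R^b_{i\bar jk\bar\ell}=R^b_{k\bar\ell i\bar j}$ and the single swap identity for $R^b_{i\bar jk\bar\ell}-R^b_{k\bar ji\bar\ell}$ (your $j\leftrightarrow\ell$ identity is their composite). It then combines this with the symmetrized Chern--Bismut comparison, in which $\sum_rT^r_{ik}\overline{T^r_{j\ell}}$ drops out by skew-symmetry in $i,k$, so the final solve is purely linear and needs no Jacobi conversions. Finally it inserts \eqref{eq:polarized-HSC}.

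Two small slips. First, \eqref{eq:reconstruction} involves five contraction shapes, not four; you omitted $\sum_rT^j_{kr}\overline{T^i_{\ell r}}$. Second, a non-K\"ahler Samelson metric is BTP but not balanced, so by Chen--Zheng it never has constant $H^c$; it can test only your comparison $R^c=R^b+Q$, not the final formula.
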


\begin{proof}
We recall the algebraic derivation in order to fix the signs and
normalization.  Put
\begin{align*}
 w_{i\bar j k\bar\ell}
 &=\sum_rT^r_{ik}\overline{T^r_{j\ell}},\\
 (v_i^j)_{i\bar j k\bar\ell}
 &=\sum_rT^j_{ir}\overline{T^k_{\ell r}},\\
 (v_k^\ell)_{i\bar j k\bar\ell}
 &=\sum_rT^\ell_{kr}\overline{T^i_{jr}},\\
 (v_i^\ell)_{i\bar j k\bar\ell}
 &=\sum_rT^\ell_{ir}\overline{T^k_{jr}},\\
 (v_k^j)_{i\bar j k\bar\ell}
 &=\sum_rT^j_{kr}\overline{T^i_{\ell r}}.
\end{align*}
For BTP metrics, the Bismut curvature has the pair symmetry
\[
 R^b_{i\bar j k\bar\ell}=R^b_{k\bar\ell i\bar j}
\]
and satisfies
\begin{align}\label{eq:BTP-curvature-skew}
 R^b_{i\bar j k\bar\ell}-R^b_{k\bar j i\bar\ell}
 =-w-v_i^j-v_k^\ell+v_i^\ell+v_k^j.
\end{align}
Consequently, the symmetrized Bismut curvature is
\begin{equation}\label{eq:symmetrized-Bismut}
 \widehat R^b
 =R^b+\frac12\bigl(w+v_i^j+v_k^\ell-v_i^\ell-v_k^j\bigr).
\end{equation}
The symmetrized Chern--Bismut comparison formula is
\begin{equation}\label{eq:symmetrized-comparison}
 \widehat R^c-\widehat R^b
 =\frac14\bigl(v_i^j+v_k^\ell+v_i^\ell+v_k^j\bigr).
\end{equation}
Substitute \eqref{eq:polarized-HSC} into
\eqref{eq:symmetrized-comparison} and then use
\eqref{eq:symmetrized-Bismut}.  Solving for $R^b$ gives
\[
 R^b=\widehat R^c-\frac12w
 -\frac34(v_i^j+v_k^\ell)
 +\frac14(v_i^\ell+v_k^j),
\]
which is exactly \eqref{eq:reconstruction}.
\end{proof}

\begin{remark}\label{rem:pointwise-after-reconstruction}
Once \eqref{eq:reconstruction} is available, all arguments below take
place at a single point.  No derivatives of $c$, integration, or global
topology occur.
\end{remark}

\subsection{Lie-algebraic notation and standard facts}

For a finite-dimensional complex Lie algebra $\Lie$, its center and
derived algebra are
\[
 Z(\Lie)=\{z\in\Lie:[z,x]=0\ \text{for all }x\in\Lie\},
 \qquad
 [\Lie,\Lie]=\Span\{[x,y]:x,y\in\Lie\}.
\]
The derived series and lower central series are
\[
 \Lie^{(0)}=\Lie,
 \qquad
 \Lie^{(m+1)}=[\Lie^{(m)},\Lie^{(m)}],
\]
and
\[
 \gamma_1(\Lie)=\Lie,
 \qquad
 \gamma_{m+1}(\Lie)=[\Lie,\gamma_m(\Lie)],
\]
respectively.  The algebra is \emph{solvable} if $\Lie^{(m)}=0$ for some
$m$ and \emph{nilpotent} if $\gamma_m(\Lie)=0$ for some $m$.  Every
nilpotent Lie algebra is solvable, but the converse is false.

The \emph{solvable radical} $\Rad(\Lie)$ is the largest solvable ideal.
The quotient $\Lie/\Rad(\Lie)$ is semisimple.  We record the elementary
facts needed later.

\begin{lemma}\label{lem:basic-Lie-facts}
Let $\Lie$ be a finite-dimensional complex Lie algebra.
\begin{enumerate}[label=\textup{(\roman*)}]
\item If $D\in\Der(\Lie)$, then $\exp(tD)\in\operatorname{Aut}(\Lie)$ for
      every $t\in\R$.
\item The radical $\Rad(\Lie)$ is preserved by every derivation.
\item Every derivation of a complex semisimple Lie algebra is inner and
      has trace zero.
\item If $\Lie\neq 0$ is nilpotent, then $Z(\Lie)\neq 0$.
\end{enumerate}
\end{lemma}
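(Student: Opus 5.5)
These four facts are standard, and I would verify each one directly from the definitions, quoting only classical theorems.

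For (i), I would prove by induction on $m$ the Leibniz formula
\[
 D^m[x,y]=\sum_{a+b=m}\binom{m}{a}[D^ax,D^by].
\]
Multiplying by $t^m/m!$ and summing, which is legitimate because both series converge absolutely in finite dimensions, turns the right-hand side into a Cauchy product:
\[
 \exp(tD)[x,y]=[\exp(tD)x,\exp(tD)y].
\]
Since $\exp(tD)$ is invertible with inverse $\exp(-tD)$, it is an automorphism.

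For (ii), I would use (i) and argue as follows. An automorphism $\phi$ carries solvable ideals to solvable ideals, so $\phi(\Rad\Lie)\subseteq\Rad\Lie$ by maximality of the radical. Applying this to $\phi_t=\exp(tD)$, the subspace $\Rad\Lie$ is invariant under $\exp(tD)$ for every $t$. Differentiating at $t=0$ then gives $D(\Rad\Lie)\subseteq\Rad\Lie$, because $\Rad\Lie$ is closed.

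For (iii), I would quote the classical fact that derivations of a semisimple Lie algebra are inner. The proof uses nondegeneracy of the Killing form: $\ad\Lie$ is an ideal in $\Der\Lie$, and its Killing-orthogonal complement commutes with $\ad\Lie$, which forces the complement to vanish. The trace statement then follows from $\Lie=[\Lie,\Lie]$. Writing $x=\sum[y_i,z_i]$ gives
\[
 \ad_x=\sum[\ad_{y_i},\ad_{z_i}],
\]
which is a sum of commutators and therefore has trace zero.

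For (iv), I would observe that if $\gamma_{m}(\Lie)\neq0$ and $\gamma_{m+1}(\Lie)=0$, then $[\Lie,\gamma_m(\Lie)]=0$, so $\gamma_m(\Lie)\subseteq Z(\Lie)$ and the center is nonzero. None of these steps is a real obstacle. The only non-elementary input is the inner-derivation theorem in (iii), which I would cite rather than reprove.
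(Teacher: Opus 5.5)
Your proposal is correct and matches the paper's proof essentially step for step in (ii)--(iv). In (ii), $\exp(tD)$ preserves the radical by maximality, and differentiating at $t=0$ gives $D$-invariance. In (iii), inner derivations of a semisimple algebra are traceless because $\Lie=[\Lie,\Lie]$. In (iv), the last nonzero term of the lower central series is central.

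The only difference is in (i). You obtain $\exp(tD)[x,y]=[\exp(tD)x,\exp(tD)y]$ from the Leibniz formula and a Cauchy product. The paper instead differentiates $F_t^{-1}[F_tx,F_ty]$ in $t$ and shows it is constant. Both are routine and equally valid.
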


\begin{proof}
For (i), let $F_t=\exp(tD)$.  Differentiating
$F_t^{-1}[F_tx,F_ty]$ and using the derivation identity shows that it is
constant in $t$ and equal to $[x,y]$ at $t=0$.  Thus
$F_t[x,y]=[F_tx,F_ty]$.

For (ii), every automorphism sends a solvable ideal to a solvable ideal.
By maximality of the radical,
$F_t(\Rad(\Lie))=\Rad(\Lie)$.  Differentiating at $t=0$ gives
$D(\Rad(\Lie))\subseteq\Rad(\Lie)$.

Statement (iii) is standard: if $\Lie$ is complex semisimple, then
$\Der(\Lie)=\ad(\Lie)$.  Since $\Lie=[\Lie,\Lie]$, each inner derivation
is a sum of commutators of endomorphisms, and hence is traceless.

For (iv), choose the last nonzero term of the lower central series:
$\gamma_m(\Lie)\neq 0$ and $\gamma_{m+1}(\Lie)=0$.  Then
$[\Lie,\gamma_m(\Lie)]=0$, so
$0\neq\gamma_m(\Lie)\subseteq Z(\Lie)$.
\end{proof}

We shall also use Lie's theorem and Engel's theorem \cite{Humphreys}.  Lie's theorem says
that every finite-dimensional complex representation of a solvable Lie
algebra can be simultaneously upper triangularized.  Engel's theorem
says that if $\ad_x$ is nilpotent for every $x\in\Lie$, then $\Lie$ is
nilpotent.  We refer to Humphreys for these standard results.

\section{The pointwise torsion algebra and two quadratic operators}
\label{sec:operators}

Fix a point $p\in M$ and write
\[
 V=T^{1,0}_pM,
 \qquad
 \Lie=(V,[\ ,\ ]),
 \qquad
 [x,y]=T_p(x,y).
\]
All arguments through Section~\ref{sec:center} take place in the
Hermitian vector space $V$.  Define complex-linear endomorphisms
$A,B\in\End_{\C}(V)$ by
\begin{equation}\label{eq:A-B-components}
 A_{i\bar j}=\sum_{r,s}T^r_{is}\overline{T^r_{js}},
 \qquad
 B_{i\bar j}=\sum_{r,s}T^j_{rs}\overline{T^i_{rs}}.
\end{equation}

\begin{lemma}\label{lem:A-B-properties}
The operators $A$ and $B$ are nonnegative Hermitian endomorphisms and
\begin{equation}\label{eq:A-B-kernels}
 \ker A=Z(\Lie),
 \qquad
 \ker B=[\Lie,\Lie]^\perp.
\end{equation}
Moreover,
\begin{equation}\label{eq:A-B-traces}
 \tr A=\tr B=\sum_{i,j,r}\abs{T^r_{ij}}^2.
\end{equation}
\end{lemma}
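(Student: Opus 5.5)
The plan is to realize $A$ and $B$ as Gram-type operators. Each quadratic form then becomes a sum of squared norms, and hermiticity, nonnegativity and the kernel descriptions can be read off. Throughout we use the convention that an endomorphism $P$ acts by $Pe_i=\sum_j P_{i\bar j}e_j$, equivalently $\ip{Pe_i}{\overline{e_j}}=P_{i\bar j}$. For the Hermitian property, we check directly from \eqref{eq:A-B-components} that $\overline{A_{j\bar i}}=\sum_{r,s}\overline{T^r_{js}}T^r_{is}=A_{i\bar j}$, and similarly for $B$. For nonnegativity, we take $x=\sum_i x_ie_i$ and compute the Hermitian form $\sum_{i,j}A_{i\bar j}x_i\overline{x_j}$. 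This gives
\[
 \sum_{r,s}\Bigl|\sum_i x_iT^r_{is}\Bigr|^2=\sum_s\norm{[x,e_s]}^2,
\]
because $[x,e_s]=\sum_r\bigl(\sum_i x_iT^r_{is}\bigr)e_r$. So $A$ is the operator with $\ip{Ax}{\overline x}=\sum_s\norm{\ad_x e_s}^2\ge 0$. Similarly,
\[
 \sum_{i,j}B_{i\bar j}\,\overline{x_i}x_j=\sum_{r,s}\Bigl|\sum_j x_j\overline{T^j_{rs}}\Bigr|^2=\sum_{r,s}\abs{\ip{[e_r,e_s]}{\overline{x}}}^2\ge 0 .
\]
Depending on the index convention this expression may appear with $x$ replaced by its conjugate coordinates. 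We will fix the convention so that it reads $\ip{Bx}{\overline x}=\sum_{r,s}\abs{\ip{x}{\overline{[e_r,e_s]}}}^2$.

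For the kernels, we use that a nonnegative Hermitian operator $P$ satisfies $Px=0$ if and only if $\ip{Px}{\overline x}=0$. For $A$, the sum $\sum_s\norm{[x,e_s]}^2$ vanishes exactly when $[x,e_s]=0$ for all $s$. By bilinearity this means $x\in Z(\Lie)$. For $B$, the sum vanishes exactly when $x$ is $g$-orthogonal to every $[e_r,e_s]$. These brackets span $[\Lie,\Lie]$ by bilinearity, so $\ker B=[\Lie,\Lie]^\perp$.

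For the traces, we simply sum diagonal entries. We get $\tr A=\sum_i A_{i\bar i}=\sum_{i,r,s}\abs{T^r_{is}}^2$ and $\tr B=\sum_i\sum_{r,s}\abs{T^i_{rs}}^2$. After renaming indices both equal $\sum_{i,j,r}\abs{T^r_{ij}}^2=\abs{T}^2$. Everything here is routine. The only real obstacle is bookkeeping: getting the index and conjugation conventions for $B$ right, so that the quadratic form is $\sum_{r,s}\abs{\ip{x}{\overline{[e_r,e_s]}}}^2$ rather than the same expression with conjugated coefficients. This does not affect the kernel statement, since $[\Lie,\Lie]^\perp$ is defined with respect to the Hermitian inner product. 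We will settle it by checking which slot of $B_{i\bar j}$ carries the conjugate in \eqref{eq:A-B-components}, namely $\overline{T^i_{rs}}$, against the convention $\ip{Pe_i}{\overline{e_j}}=P_{i\bar j}$.
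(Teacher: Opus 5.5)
Your proposal is correct and follows essentially the same route as the paper: both write $\ip{Ax}{\overline x}=\sum_s\norm{[x,e_s]}^2$ and $\ip{Bx}{\overline x}=\sum_{r,s}\abs{\ip{[e_r,e_s]}{\overline x}}^2$, read off the kernels from where these nonnegative forms vanish, and get the traces by relabeling indices. You also usefully make explicit that $\ip{Px}{\overline x}=0$ forces $Px=0$ for nonnegative Hermitian $P$, and the bookkeeping slip in your $B$-display is harmless: under your convention its left side should be $\sum_{i,j}B_{i\bar j}x_i\overline{x_j}$, and with that correction your middle and right-hand expressions are exactly $\ip{Bx}{\overline x}$, so the conjugation issue you flagged resolves in your favor.
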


\begin{proof}
For $x=\sum_ix_ie_i$,
\begin{align*}
 \ip{Ax}{\overline x}
 &=\sum_s\norm{T(x,e_s)}^2\geq 0,\\
 \ip{Bx}{\overline x}
 &=\sum_{r,s}\abs{\ip{T(e_r,e_s)}{\overline x}}^2\geq 0.
\end{align*}
Thus $Ax=0$ if and only if $T(x,e_s)=0$ for every $s$, which is precisely
$x\in Z(\Lie)$.  Likewise, $Bx=0$ if and only if $x$ is orthogonal to
every bracket, that is, $x\in[\Lie,\Lie]^\perp$.  Finally, both traces
are the same complete ordered sum of squared torsion components after a
relabeling of the three indices.
\end{proof}

\begin{remark}\label{rem:input-output}
The two kernels have different meanings.  The condition $x\in\ker A$
eliminates $x$ as a nontrivial \emph{input} of the bracket, whereas
$x\in\ker B$ eliminates the $x$-component of every bracket
\emph{output}.  Neither condition implies the other in general.  For
example, a central vector may still occur as the value of a bracket, as
in the Heisenberg algebra.
\end{remark}

\section{The contracted curvature derivation}\label{sec:contracted}

Define
\begin{equation}\label{eq:S-definition}
 S:=\sum_{i=1}^nD_{i\bar i}
 =\sum_{i=1}^nR^b(e_i,\overline{e_i})\big|_V.
\end{equation}
By Lemma~\ref{lem:curvature-derivations},
\begin{equation}\label{eq:S-derivation}
 S\in\Der(\Lie).
\end{equation}

\begin{lemma}\label{lem:S-formula}
If the metric is balanced, then
\begin{equation}\label{eq:S-formula}
 S=\frac{n+1}{2}c\,\Id+\frac14(B-A).
\end{equation}
In particular,
\begin{equation}\label{eq:S-trace}
 \tr S=\frac{n(n+1)}2c.
\end{equation}
\end{lemma}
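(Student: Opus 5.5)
The plan is to contract the reconstruction formula of Proposition~\ref{prop:reconstruction} directly: set $j=i$ and sum over $i$. With the convention \eqref{eq:Dij-definition}, the endomorphism $S$ acts by $Se_k=\sum_\ell S_{k\bar\ell}\,e_\ell$, where
\[
 S_{k\bar\ell}=\sum_i R^b_{i\bar i k\bar\ell}.
\]
I will compute this contraction term by term and then identify each piece with $\delta_{k\ell}$, $A_{k\bar\ell}$, $B_{k\bar\ell}$, or a term containing Gauduchon's form $\eta$.

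\emph{Constant term.} This gives $\frac c2\sum_i(\delta_{k\ell}+\delta_{i\ell}\delta_{ki})=\frac{n+1}{2}c\,\delta_{k\ell}$.

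\emph{The $w$-term.} This gives $-\frac12\sum_{i,r}T^r_{ik}\overline{T^r_{i\ell}}$. Using skew-symmetry in both lower index pairs, this equals $-\frac12\sum_{r,s}T^r_{ks}\overline{T^r_{\ell s}}=-\frac12A_{k\bar\ell}$.

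\emph{The $-\frac34$ terms.} These become $\sum_{i,r}T^i_{ir}\overline{T^k_{\ell r}}$ and $\sum_{i,r}T^\ell_{kr}\overline{T^i_{ir}}$. Each contains the contraction $\sum_iT^i_{ir}=\eta_r$ or its conjugate. By balancedness, that is $\eta=0$ (equivalently Lemma~\ref{lem:balanced-unimodular}), both vanish.

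\emph{The $+\frac14$ terms.} The first is $\sum_{i,r}T^\ell_{ir}\overline{T^k_{ir}}$, which is $B_{k\bar\ell}$ by \eqref{eq:A-B-components}. The second is $\sum_{i,r}T^i_{kr}\overline{T^i_{\ell r}}=A_{k\bar\ell}$.

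Collecting the pieces,
\[
 S_{k\bar\ell}=\frac{n+1}2c\,\delta_{k\ell}-\frac12A_{k\bar\ell}+\frac14A_{k\bar\ell}+\frac14B_{k\bar\ell},
\]
which is \eqref{eq:S-formula}.

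The only real obstacle is bookkeeping: making sure that the index placement in $A_{i\bar j}$ and $B_{i\bar j}$ matches the matrix convention used for $S$, so that one does not accidentally get a transpose. Since $A$ and $B$ are Hermitian (Lemma~\ref{lem:A-B-properties}), a transpose would amount to a complex conjugation of entries. To rule this out, I will check the convention once on the quadratic forms $\ip{Ax}{\overline x}$ and $\ip{Bx}{\overline x}$, which are real.

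Finally, the trace formula \eqref{eq:S-trace} follows by taking the trace of \eqref{eq:S-formula} and using $\tr A=\tr B$ from \eqref{eq:A-B-traces}.
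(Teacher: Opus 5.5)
Your proposal is correct and is essentially the paper's own proof: contract \eqref{eq:reconstruction} with $j=i$, identify the $w$-term and the second $\frac14$-term with $A_{k\bar\ell}$ and the first $\frac14$-term with $B_{k\bar\ell}$, eliminate the two $-\frac34$ terms using $\eta=0$, and take the trace using $\tr A=\tr B$. The transpose worry does not arise, because $S_{k\bar\ell}$, $A_{k\bar\ell}$, and $B_{k\bar\ell}$ all use the same convention $\ip{(\cdot)e_k}{\overline{e_\ell}}$, so your entrywise matching with \eqref{eq:A-B-components} is already exact, and the trace identity would be unaffected in any case.
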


\begin{proof}
Write
\[
 S_{k\bar\ell}=\ip{Se_k}{\overline{e_\ell}}
 =\sum_iR^b_{i\bar i k\bar\ell}.
\]
Set $j=i$ in \eqref{eq:reconstruction} and sum over $i$.  The constant
curvature part is
\[
 \sum_i\frac c2
 (\delta_{ii}\delta_{k\ell}+\delta_{i\ell}\delta_{ki})
 =\frac{n+1}{2}c\,\delta_{k\ell}.
\]
The first quadratic torsion term is
\[
 -\frac12\sum_{i,r}T^r_{ik}\overline{T^r_{i\ell}}
 =-\frac12A_{k\bar\ell},
\]
where skew-symmetry in the two lower indices is used twice.

The two terms with coefficient $-3/4$ contain, respectively,
\[
 \sum_iT^i_{ir}
 \qquad\text{and}\qquad
 \sum_i\overline{T^i_{ir}}.
\]
They vanish because balancedness gives $\eta_r=\sum_iT^i_{ir}=0$.
The first term with coefficient $1/4$ gives
\[
 \frac14\sum_{i,r}T^\ell_{ir}\overline{T^k_{ir}}
 =\frac14B_{k\bar\ell},
\]
and the second gives
\[
 \frac14\sum_{i,r}T^i_{kr}\overline{T^i_{\ell r}}
 =\frac14A_{k\bar\ell}.
\]
Adding all contributions proves \eqref{eq:S-formula}.  Taking the trace
and using \eqref{eq:A-B-traces} yields \eqref{eq:S-trace}.
\end{proof}

\begin{remark}
The fact that $S$ is a derivation uses only $\nabla^bT=0$.  Formula
\eqref{eq:S-formula} uses the reconstruction formula and balancedness;
balancedness enters exactly in the vanishing of the two contracted
$-3/4$ terms.
\end{remark}

The scalar part in \eqref{eq:S-formula} is the first source of rigidity.
Indeed, a nonzero scalar multiple of the identity is not a derivation of
a nonabelian Lie algebra: the left-hand side of the derivation identity
has one copy of the scalar, whereas the right-hand side has two.  The
remaining sections make this incompatibility precise without assuming
that $A$ and $B$ preserve any preferred splitting.

\section{Excluding a nonzero semisimple quotient}\label{sec:radical}

Let
\[
 \rad=\Rad(\Lie),
 \qquad
 U=\rad^\perp,
 \qquad
 p=\dim_{\C}\rad,
 \qquad
 q=\dim_{\C}U=n-p.
\]
Choose a unitary basis adapted to the orthogonal decomposition
\begin{equation}\label{eq:radical-decomposition}
 V=\rad\oplus U.
\end{equation}
Indices $a,b,d$ range over $\rad$, and indices $\alpha,\beta,\gamma$
range over $U$.  Since $\rad$ is an ideal,
\begin{equation}\label{eq:radical-ideal}
 T(\rad,V)\subseteq\rad.
\end{equation}
Thus the torsion has exactly four possible types:
\begin{equation}\label{eq:four-torsion-types}
 \Lambda^2\rad\longrightarrow\rad,
 \qquad
 U\wedge\rad\longrightarrow\rad,
 \qquad
 \Lambda^2U\longrightarrow\rad,
 \qquad
 \Lambda^2U\longrightarrow U.
\end{equation}
There are no components $\Lambda^2\rad\to U$ or
$U\wedge\rad\to U$.

Using ordered sums, define
\begin{align}
 P&:=\sum_{a,b,d}\abs{T^d_{ab}}^2,
 \label{eq:P-definition}\\
 M&:=\sum_{\alpha,a,b}\abs{T^b_{\alpha a}}^2,
 \label{eq:M-definition}\\
 N&:=\sum_{\alpha,\beta,a}\abs{T^a_{\alpha\beta}}^2,
 \label{eq:N-definition}\\
 L&:=\sum_{\alpha,\beta,\gamma}\abs{T^\gamma_{\alpha\beta}}^2,
 \label{eq:L-definition}\\
 E&:=\sum_r\abs{\sum_aT^a_{ar}}^2.
 \label{eq:E-definition}
\end{align}
The quantities $P,M,N,L$ exhaust all four types in
\eqref{eq:four-torsion-types}.

\subsection{The mixed curvature identity}

\begin{lemma}\label{lem:mixed-identity}
Assume $0<p<n$.  Then
\begin{equation}\label{eq:mixed-identity}
 2pqc=M+3N+2E.
\end{equation}
\end{lemma}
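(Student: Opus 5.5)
The plan is to obtain \eqref{eq:mixed-identity} as a trace identity on the semisimple quotient $\Lie/\rad\cong U$, with each side evaluated through the reconstruction formula of Proposition~\ref{prop:reconstruction}.

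\emph{Step 1 (the trace vanishes).} Fix a radical index $a$ and set $D=D_{a\bar a}=R^b(e_a,\overline{e_a})|_V$. By Lemma~\ref{lem:curvature-derivations}, $D\in\Der(\Lie)$. By Lemma~\ref{lem:basic-Lie-facts}(ii), $D$ preserves $\rad$, so it induces a derivation $\bar D$ of the semisimple quotient $\Lie/\rad$. Since $0<p<n$, this quotient is nonzero and semisimple, and Lemma~\ref{lem:basic-Lie-facts}(iii) gives $\tr\bar D=0$. In the adapted unitary basis the quotient is identified with $U$, so
\[
 0=\tr\bar D=\sum_\alpha\ip{De_\alpha}{\overline{e_\alpha}}=\sum_\alpha R^b_{a\bar a\alpha\bar\alpha}.
\]
Summing over $a$ gives $\sum_{a,\alpha}R^b_{a\bar a\alpha\bar\alpha}=0$. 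The same argument with $S$ (which also preserves $\rad$) gives a second identity, $\tr_U S=0$, which I keep in reserve.

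\emph{Step 2 (evaluate the sum by reconstruction).} Substitute $i=j=a$ and $k=\ell=\alpha$ into \eqref{eq:reconstruction} and sum, sorting every product by the four torsion types \eqref{eq:four-torsion-types}. This uses the vanishing of the components $\Lambda^2\rad\to U$ and $U\wedge\rad\to U$.
\begin{itemize}
\item The constant term contributes $\frac c2pq$, since $\delta_{a\alpha}=0$.
\item The $w$-term gives $-\frac12\sum_{a,\alpha,r}\abs{T^r_{a\alpha}}^2=-\frac12M$.
\item The two $-\frac34$ terms contain $\sum_aT^a_{ar}$ paired with $\sum_\alpha\overline{T^\alpha_{\alpha r}}$, and conversely. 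Balancedness gives $\sum_\alpha T^\alpha_{\alpha r}=-\sum_aT^a_{ar}$, so these terms contribute a multiple of $E$.
\item The term $T^\ell_{ir}\overline{T^k_{jr}}=T^\alpha_{ar}\overline{T^\alpha_{ar}}$ vanishes, because $T(\rad,V)\subseteq\rad$.
\item The term $T^j_{kr}\overline{T^i_{\ell r}}=\abs{T^a_{\alpha r}}^2$, summed over $r\in\rad$ and $r\in U$, gives $M+N$.
\end{itemize}

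\emph{Step 3 (combine).} Collecting these contributions produces one linear relation between $pqc$, $M$, $N$ and $E$. The reserve identity $\tr_US=0$, evaluated via Lemma~\ref{lem:S-formula}, supplies a second relation. For this I compute $\tr_UB=L$ and $\tr_UA=L+N+M$, so
\[
 \tr_U S=\frac{q(n+1)}2c-\frac14(M+N).
\]
If the first relation alone does not give \eqref{eq:mixed-identity}, I will also use the transposed contraction $\sum_{a,\alpha}R^b_{a\bar\alpha\alpha\bar a}$. This is the trace over $U$ of the induced derivations $D_{a\bar\alpha}$, and the same quotient argument makes it vanish. I will then take the combination that eliminates $L$ and matches the coefficients $2pq$, $1$, $3$ and $2$.

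\emph{Main obstacle.} I expect the hardest step to be the sign and normalization bookkeeping in Step 2, above all the $-\frac34$ terms, where balancedness converts a cross term into $E$ only up to a sign. My first check is the sign of $E$, since it must come out nonnegative on the right-hand side. Choosing which of the vanishing traces to combine is secondary, and I will settle it by matching coefficients.
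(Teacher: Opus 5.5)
\textbf{There is a genuine gap.} The contraction you actually evaluate does not give \eqref{eq:mixed-identity}, and the fallback that would give it rests on an incorrect justification.

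Your Step~2 entries are right term by term. Summing them gives $-\frac12M$ from $w$, $-\frac34\cdot(-2E)=+\frac32E$ from the two balanced cross terms, and $\frac14(M+N)$ from the surviving $\frac14$ term. So $\sum_a\tr_UD_{a\bar a}=0$ yields
\[
 0=\frac{pq}{2}c-\frac14M+\frac14N+\frac32E,
 \qquad\text{i.e.}\qquad
 2pqc=M-N-6E.
\]
This is a true identity, and the negative sign on $E$ is not a bookkeeping error. But it is not the lemma, and no linear combination with $\tr_US=0$ (that is, $2q(n+1)c=M+N$) produces the lemma. Matching $E$-coefficients forces the multiplier $-\frac13$ on the first relation. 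Matching $N$ then forces $\frac83$ on the second. The $M$-coefficients then become $\frac73\neq1$. Eliminating $c$ instead gives $\bigl(1-\frac p{n+1}\bigr)M=\bigl(1+\frac p{n+1}\bigr)N+6E$, whose mixed signs are useless for Proposition~\ref{prop:solvable}. Also, $L$ never enters either relation, so there is nothing to eliminate there. Your planned sign check on $E$ would therefore not diagnose the problem.

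\textbf{The transposed contraction is the right object, but your reason for its vanishing is wrong.} The component $R^b_{a\bar\alpha\alpha\bar a}=\ip{D_{a\bar\alpha}e_\alpha}{\overline{e_a}}$ is an entry of the $U\to\rad$ block of $D_{a\bar\alpha}$. It is not a diagonal entry of the $U$-block, so the quotient-trace argument says nothing about it.

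The correct reason is as follows. By the pair symmetry recorded in the proof of Proposition~\ref{prop:reconstruction} (or by the Hermitian symmetry $\overline{R^b_{i\bar jk\bar\ell}}=R^b_{j\bar i\ell\bar k}$), this component equals $R^b_{\alpha\bar a a\bar\alpha}=\ip{D_{\alpha\bar a}e_a}{\overline{e_\alpha}}$. That entry vanishes for each individual pair $(a,\alpha)$, simply because the derivation $D_{\alpha\bar a}$ maps $\rad$ into $\rad$, and $\rad\perp U$. Only radical invariance is used, not semisimplicity.

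This is the paper's route. Insert $(i,j,k,\ell)=(\alpha,a,a,\alpha)$ into \eqref{eq:reconstruction}. The $w$-term gives $+\frac12\sum_r\abs{T^r_{\alpha a}}^2$. One $-\frac34$ term gives $-\frac34\sum_r\abs{T^a_{\alpha r}}^2$; the other vanishes since $T(\rad,V)\subseteq\rad$. The two $\frac14$ terms combine into $\frac12\operatorname{Re}\sum_rT^\alpha_{\alpha r}\overline{T^a_{ar}}$. Summing over $a,\alpha$ and using $\eta=0$ gives
\[
 0=\frac{pq}2c+\frac12M-\frac34(M+N)-\frac12E,
\]
which is \eqref{eq:mixed-identity} directly, with no combination step and no need for Step~2 or $\tr_US$.
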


\begin{proof}
Every derivation preserves the radical by
Lemma~\ref{lem:basic-Lie-facts}.  Hence
\[
 D_{\alpha\bar a}e_a\in\rad.
\]
Since $e_\alpha\perp\rad$,
\begin{equation}\label{eq:mixed-curvature-zero}
 R^b_{\alpha\bar a a\bar\alpha}
 =\ip{D_{\alpha\bar a}e_a}{\overline{e_\alpha}}=0.
\end{equation}
Insert
\[
 i=\alpha,
 \qquad
 j=a,
 \qquad
 k=a,
 \qquad
 \ell=\alpha
\]
into \eqref{eq:reconstruction}.  The constant term is $c/2$.  By
skew-symmetry,
\[
 -\frac12\sum_rT^r_{\alpha a}\overline{T^r_{a\alpha}}
 =\frac12\sum_r\abs{T^r_{\alpha a}}^2.
\]
The two $-3/4$ terms are
\[
 -\frac34\sum_r\abs{T^a_{\alpha r}}^2
 \qquad\text{and}\qquad
 -\frac34\sum_r\abs{T^\alpha_{ar}}^2.
\]
The second one vanishes because $T(e_a,e_r)\in\rad$ by
\eqref{eq:radical-ideal}.  The two $1/4$ terms are complex conjugates.
Thus \eqref{eq:mixed-curvature-zero} becomes
\begin{align}
 0={}&\frac c2
 +\frac12\sum_r\abs{T^r_{\alpha a}}^2
 -\frac34\sum_r\abs{T^a_{\alpha r}}^2
 \notag\\
 &\quad
 +\frac12\operatorname{Re}\sum_r
 T^\alpha_{\alpha r}\overline{T^a_{ar}}.
\label{eq:mixed-pointwise}
\end{align}

We now sum over $\alpha$ and $a$.  First,
\begin{equation}\label{eq:mixed-sum-one}
 \sum_{\alpha,a,r}\abs{T^r_{\alpha a}}^2=M,
\end{equation}
because $T(U,\rad)\subseteq\rad$.  Next,
\begin{equation}\label{eq:mixed-sum-two}
 \sum_{\alpha,a,r}\abs{T^a_{\alpha r}}^2=M+N.
\end{equation}
Indeed, $r=b\in\rad$ contributes $M$ after interchanging the dummy
indices $a,b$, while $r=\beta\in U$ contributes $N$.

For the last term, balancedness gives, for each $r$,
\[
 \sum_\alpha T^\alpha_{\alpha r}
 =-\sum_aT^a_{ar}.
\]
Consequently,
\begin{align}\label{eq:mixed-cross-term}
 \operatorname{Re}\sum_{\alpha,a,r}
 T^\alpha_{\alpha r}\overline{T^a_{ar}}
 &=-\sum_r\abs{\sum_aT^a_{ar}}^2
 =-E.
\end{align}
Summing \eqref{eq:mixed-pointwise} and using
\eqref{eq:mixed-sum-one}--\eqref{eq:mixed-cross-term}, we obtain
\[
 0=\frac{pq}{2}c+\frac12M-\frac34(M+N)-\frac12E.
\]
Multiplying by four gives \eqref{eq:mixed-identity}.
\end{proof}

\subsection{Trace on the semisimple quotient}

\begin{lemma}\label{lem:quotient-trace}
Assume $0<p<n$.  Then
\begin{equation}\label{eq:quotient-trace}
 2q(n+1)c=M+N.
\end{equation}
\end{lemma}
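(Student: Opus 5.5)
The plan is to take the trace of the contracted curvature derivation $S$ on the semisimple quotient $\Lie/\rad$ in two ways. Abstractly this trace vanishes. Explicitly, Lemma~\ref{lem:S-formula} expresses it in terms of $c$ and the quantities $M,N,L$.

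\emph{Step 1 (vanishing).} By \eqref{eq:S-derivation}, $S\in\Der(\Lie)$. By Lemma~\ref{lem:basic-Lie-facts}(ii), $S(\rad)\subseteq\rad$, so $S$ induces a linear map $\bar S$ on $\Lie/\rad$. A one-line check shows that $\bar S$ is a derivation of the quotient bracket. Since $0<p<n$, the quotient $\Lie/\rad$ is a nonzero complex semisimple Lie algebra. Lemma~\ref{lem:basic-Lie-facts}(iii) then gives $\tr\bar S=0$.

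\emph{Step 2 (matrix identification).} In the unitary basis adapted to \eqref{eq:radical-decomposition}, the invariance $S(\rad)\subseteq\rad$ means the $U\to\rad$-column block is the only off-diagonal block that can be nonzero; the $\rad\to U$ block vanishes. Under the linear isomorphism $U\cong V/\rad$, the $U$–$U$ diagonal block represents $\bar S$. Hence
\[
 0=\tr\bar S=\sum_\alpha S_{\alpha\bar\alpha}.
\]

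\emph{Step 3 (computation).} Lemma~\ref{lem:S-formula} gives
\[
 \sum_\alpha S_{\alpha\bar\alpha}=\frac{q(n+1)}2c+\frac14\Bigl(\sum_\alpha B_{\alpha\bar\alpha}-\sum_\alpha A_{\alpha\bar\alpha}\Bigr).
\]
From \eqref{eq:A-B-components}, we have $\sum_\alpha A_{\alpha\bar\alpha}=\sum_{\alpha,r,s}\abs{T^r_{\alpha s}}^2$. We split this by the type of $s$ using \eqref{eq:four-torsion-types}:
\begin{itemize}
\item $s=a\in\rad$ forces $r\in\rad$ and contributes $M$;
\item $s=\beta\in U$ contributes $N+L$, according to whether the output lies in $\rad$ or in $U$.
\end{itemize}
Similarly, $\sum_\alpha B_{\alpha\bar\alpha}=\sum_{\alpha,r,s}\abs{T^\alpha_{rs}}^2$ counts only brackets with a $U$-component in the output. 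By \eqref{eq:four-torsion-types} these come only from $\Lambda^2U\to U$, so this sum equals $L$. Therefore
\[
 0=\frac{q(n+1)}2c+\frac14\bigl(L-M-N-L\bigr),
\]
and multiplying by $4$ gives $2q(n+1)c=M+N$.

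The main points needing care are the identification in Step 2, that the compressed block equals the induced quotient map, and the bookkeeping of ordered sums in Step 3. The latter must use the skew-symmetry conventions consistently with \eqref{eq:M-definition}–\eqref{eq:L-definition}. Neither step is conceptually difficult.
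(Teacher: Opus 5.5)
Your proposal is correct and matches the paper's proof step for step. It shows that the induced derivation $\overline S$ on the semisimple quotient $\Lie/\rad$ is traceless, identifies $\tr\overline S$ with $\tr_U S$ through the block-triangular form of $S$, and computes $\tr_U A=M+N+L$ and $\tr_U B=L$, so that $L$ cancels and $2q(n+1)c=M+N$ follows.
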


\begin{proof}
The quotient $\Lie/\rad$ is semisimple.  Since $S$ is a derivation and
preserves $\rad$, it induces a derivation $\overline S$ of
$\Lie/\rad$.  Every derivation of a complex semisimple Lie algebra is
inner and traceless, so
\begin{equation}\label{eq:quotient-S-zero}
 \tr\overline S=0.
\end{equation}
The orthogonal complement $U$ need not be invariant under $S$.  However,
relative to \eqref{eq:radical-decomposition}, the matrix of $S$ has the
block form
\[
 S=\begin{pmatrix}
 S_{\rad\rad}&S_{\rad U}\\
 0&S_{UU}
 \end{pmatrix},
\]
and $S_{UU}$ represents the induced quotient map.  Hence
\begin{equation}\label{eq:quotient-trace-U}
 \tr\overline S=\tr_U S.
\end{equation}
Using \eqref{eq:S-formula}, we get
\begin{equation}\label{eq:quotient-trace-intermediate}
 0=\frac{q(n+1)}2c
 +\frac14\bigl(\tr_U B-\tr_U A\bigr).
\end{equation}

We retain every torsion type.  From the definition of $A$,
\begin{equation}\label{eq:trace-U-A}
 \tr_U A
 =\sum_{\alpha,s,r}\abs{T^r_{\alpha s}}^2
 =M+N+L.
\end{equation}
For $B$,
\[
 \tr_U B=\sum_{\alpha,r,s}\abs{T^\alpha_{rs}}^2.
\]
An output in $U$ is impossible whenever one input belongs to $\rad$,
because $\rad$ is an ideal.  Therefore only the type
$\Lambda^2U\to U$ remains, and
\begin{equation}\label{eq:trace-U-B}
 \tr_U B=L.
\end{equation}
Thus $L$ is not discarded; it cancels in $\tr_U(B-A)$.  Substituting
\eqref{eq:trace-U-A} and \eqref{eq:trace-U-B} into
\eqref{eq:quotient-trace-intermediate} gives
\eqref{eq:quotient-trace}.
\end{proof}

\begin{proposition}\label{prop:solvable}
If $c\neq 0$, then the torsion Lie algebra $\Lie$ is solvable.
\end{proposition}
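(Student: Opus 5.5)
The plan is a case split on $p=\dim_{\C}\rad$, the dimension of the radical. If $p=n$, then $\Lie=\rad$ is solvable and there is nothing to prove. The remaining cases $p=0$ and $0<p<n$ will each be ruled out by a trace argument on the contracted curvature derivation $S$ of Section~\ref{sec:contracted}, using $c\neq 0$.

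\emph{Case $p=0$.} Here $\Rad(\Lie)=0$, so $\Lie$ is semisimple, and it is nonzero because $n\geq 2$. By \eqref{eq:S-derivation}, $S$ is a derivation of $\Lie$. Lemma~\ref{lem:basic-Lie-facts}(iii) then gives $\tr S=0$. On the other hand, \eqref{eq:S-trace} gives $\tr S=\tfrac{n(n+1)}2c$, which is nonzero since $c\neq0$. This is a contradiction.

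\emph{Case $0<p<n$.} I combine Lemma~\ref{lem:mixed-identity} and Lemma~\ref{lem:quotient-trace}:
\[
 2pqc=M+3N+2E,\qquad 2q(n+1)c=M+N.
\]
The right-hand side of the second identity is nonnegative and $q>0$, so $c\geq 0$. Since $c\neq 0$, this forces $c>0$. Because $N\geq 0$ and $E\geq 0$, the first identity gives
\[
 2pqc=M+3N+2E\;\geq\; M+N=2q(n+1)c .
\]
Hence $2q(n+1-p)c\leq 0$. But $q>0$, $c>0$, and $n+1-p>0$, so this is impossible. Therefore $0<p<n$ cannot occur either, and $p=n$, i.e.\ $\Lie$ is solvable.

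There is no real obstacle left at this point. The heavy lifting is done in the two lemmas, namely the mixed curvature identity and the quotient trace computation, and the proposition follows from comparing them. The only care needed is the sign bookkeeping: the second identity must be used first to fix $c>0$, and only then does the inequality $p<n+1$ give the contradiction. The boundary case $p=0$ has to be treated separately because Lemmas~\ref{lem:mixed-identity} and \ref{lem:quotient-trace} assume $0<p<n$.
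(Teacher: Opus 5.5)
Your proof is correct and follows the paper's argument: the case $\rad=0$ is excluded by the trace of the derivation $S$, and the case $0<p<n$ is excluded by combining Lemma~\ref{lem:mixed-identity} with Lemma~\ref{lem:quotient-trace}. The only difference is cosmetic: you first extract $c>0$ from the quotient-trace identity and then use $M+3N+2E\geq M+N$. The paper instead eliminates $c$ to obtain a vanishing positive combination of $M,N,E$, which forces $M=N=E=0$ and hence $c=0$.
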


\begin{proof}
If $\rad=0$, then $\Lie$ is semisimple.  Since
$S\in\Der(\Lie)$, Lemma~\ref{lem:basic-Lie-facts} gives
$\tr S=0$.  This contradicts \eqref{eq:S-trace} when $c\neq 0$.

Suppose that $0<p<n$.  Combining \eqref{eq:mixed-identity} and
\eqref{eq:quotient-trace} gives
\[
 \frac{p}{n+1}(M+N)=M+3N+2E,
\]
or equivalently,
\begin{equation}\label{eq:positive-combination}
 \left(1-\frac{p}{n+1}\right)M
 +\left(3-\frac{p}{n+1}\right)N
 +2E=0.
\end{equation}
Since $1\leq p\leq n-1$, both displayed coefficients are strictly
positive.  As $M,N,E\geq 0$, equation
\eqref{eq:positive-combination} implies
\[
 M=N=E=0.
\]
Equation~\eqref{eq:quotient-trace} then gives $c=0$, again a
contradiction.  Therefore the only possibility for $c\neq 0$ is
$\rad=\Lie$, that is, $\Lie$ is solvable.
\end{proof}

\begin{remark}
The pure radical quantity $P$ is not assumed to vanish.  It does not
occur in Lemma~\ref{lem:mixed-identity} because one input in the chosen
curvature component always lies in $U$, and it does not occur in
Lemma~\ref{lem:quotient-trace} because the trace is taken over $U$.
Thus arbitrary bracket components internal to the radical are retained.
\end{remark}

\section{The solvable case and adjoint weights}\label{sec:weights}

Assume from now on that $\Lie$ is solvable.  Put
\begin{equation}\label{eq:d-K-definition}
 \der=[\Lie,\Lie],
 \qquad
 K=\der^\perp.
\end{equation}
By Lemma~\ref{lem:A-B-properties},
\begin{equation}\label{eq:K-kernel-B}
 K=\ker B.
\end{equation}
The space $K$ is a Hermitian representative of the abelianization
$\Lie/\der$; it need not be a subalgebra.

\subsection{A derivation lemma for weights}

By Lie's theorem, there is a basis of $\Lie$ in which every
$\ad_x$ is upper triangular.  The diagonal entries are linear
functionals
\[
 \lambda_1(x),\ldots,\lambda_n(x).
\]
The distinct functionals among them are called the weights of the adjoint
representation.

\begin{lemma}
\label{lem:weight-derivation}
Let $\Lambda\subset\Lie^*$ be the finite set of distinct adjoint weights.
Then:
\begin{enumerate}[label=\textup{(\roman*)}]
\item every $\lambda\in\Lambda$ vanishes on
      $\der=[\Lie,\Lie]$;
\item for every $D\in\Der(\Lie)$ and every $\lambda\in\Lambda$,
      \begin{equation}\label{eq:weight-annihilates-D}
       \lambda\circ D=0;
      \end{equation}
\item if every adjoint weight is zero, then $\Lie$ is nilpotent.
\end{enumerate}
\end{lemma}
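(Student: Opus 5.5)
For (i), I would fix a basis $v_1,\dots,v_n$ from Lie's theorem and set $\Lie_m=\Span\{v_1,\dots,v_m\}$. This gives a flag of subspaces invariant under every $\ad_x$, that is, a flag of ideals. Each one-dimensional quotient $\Lie_m/\Lie_{m-1}$ is then a representation of $\Lie$ given by the character $\lambda_m$. A one-dimensional representation is a Lie algebra homomorphism $\Lie\to\C$ into an abelian algebra, so $\lambda_m([x,y])=\lambda_m(x)\lambda_m(y)-\lambda_m(y)\lambda_m(x)=0$. This proves (i). It is also worth recording that the diagonal of $\ad_x$ in this basis is intrinsic: the multiset $\{\lambda_1(x),\dots,\lambda_n(x)\}$ is the multiset of eigenvalues of $\ad_x$, counted with algebraic multiplicity.

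For (ii), I would use Lemma~\ref{lem:basic-Lie-facts}(i). Put $F_t=\exp(tD)\in\operatorname{Aut}(\Lie)$. Then $\ad_{F_tx}=F_t\,\ad_x\,F_t^{-1}$, so $\ad_{F_tx}$ and $\ad_x$ have the same eigenvalue multiset. By the remark above, for every $x$,
\[
 \{\lambda_1(F_tx),\dots,\lambda_n(F_tx)\}=\{\lambda_1(x),\dots,\lambda_n(x)\}
\]
as multisets. To get a statement about each individual weight rather than about multisets, I would use a symmetric-function argument. For every integer $k\ge1$, the power sum $\sum_m\lambda_m(F_tx)^k$ is independent of $t$. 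Differentiating at $t=0$ gives
\[
 \sum_m k\,\lambda_m(x)^{k-1}\lambda_m(Dx)=0\qquad\text{for all }x,\ k.
\]
Grouping by distinct weights with multiplicities $n_\lambda>0$, this says $\sum_{\lambda\in\Lambda}n_\lambda\,\lambda(x)^{k-1}\lambda(Dx)=0$ for all $k$.

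Next I choose $x$ generic, so that the values $\lambda(x)$ for distinct $\lambda\in\Lambda$ are pairwise distinct. This is possible because the differences $\lambda-\mu$ are finitely many nonzero functionals, and a complex vector space is not a finite union of proper hyperplanes. A Vandermonde argument then gives $\lambda(Dx)=0$ for every $\lambda$ and every such $x$. The generic $x$ form a dense set and $\lambda\circ D$ is continuous (indeed linear), so $\lambda\circ D=0$ everywhere. The main obstacle is exactly this step from invariance of the multiset to invariance of each weight; the genericity plus Vandermonde argument is the route I would take. An alternative is to use that $F_t$ acts continuously on the finite set $\Lambda$, since $\lambda\circ F_t$ is again a weight by the multiset equality at generic points together with linearity. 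Then $F_t$ fixes each weight for $t$ near $0$, and differentiating gives the claim.

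For (iii), if all weights vanish then every $\ad_x$ is strictly upper triangular in the Lie basis, hence nilpotent. Engel's theorem then gives that $\Lie$ is nilpotent.
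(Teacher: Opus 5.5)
Your proof is correct. Parts (i) and (iii) agree with the paper in substance: you read off $\lambda_m([x,y])=0$ from the one-dimensional quotients of the Lie flag, while the paper uses the zero diagonal of $[\ad_x,\ad_y]$.

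For (ii) your main route is genuinely different. The paper argues at the level of representations. Since $\ad\circ F_t$ is equivalent to $\ad$, its weight set $\{\lambda\circ F_t\}$ equals $\Lambda$, and continuity of $t\mapsto\lambda\circ F_t$ into a finite set forces $\lambda\circ F_t=\lambda$. This is short, but it tacitly uses that the weight set of a solvable representation does not depend on the triangularizing basis, which is a Jordan--H\"older type fact.

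Your argument needs only that conjugate operators have the same characteristic polynomial, i.e.\ $\tr(\ad_{F_tx}^k)=\tr(\ad_x^k)$. You then turn the identities $\sum_{\lambda}n_\lambda\lambda(x)^{k-1}\lambda(Dx)=0$ into $\lambda(Dx)=0$ by a Vandermonde argument at points $x$ that separate the weights. So it proves the claim for the $\Lambda$ read off from any one Lie basis, without needing basis-independence. The price is the genericity and Vandermonde step.

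Your route does not even need the exponential. For a derivation, $\ad_{Dx}=[D,\ad_x]$, so cyclicity of the trace gives $\tr(\ad_x^{k-1}\ad_{Dx})=\tr(\ad_x^{k-1}D\ad_x)-\tr(\ad_x^kD)=0$ directly. In the triangular basis this trace is exactly $\sum_m\lambda_m(x)^{k-1}\lambda_m(Dx)$.

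Your alternative is essentially the paper's argument with the well-definedness step made explicit. There you show $\lambda\circ F_t\in\Lambda$ because a complex vector space is not a finite union of proper subspaces, and then use continuity in $t$.
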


\begin{proof}
Choose a basis in which all $\ad_x$ are upper triangular.  Since the
commutator of two upper triangular matrices has zero diagonal and
\[
 \ad_{[x,y]}=[\ad_x,\ad_y],
\]
every weight vanishes on $[\Lie,\Lie]$.  This proves (i).

Let $D\in\Der(\Lie)$ and $F_t=\exp(tD)$.  By
Lemma~\ref{lem:basic-Lie-facts}, $F_t$ is an automorphism.  For every
$x\in\Lie$,
\begin{equation}\label{eq:ad-conjugacy}
 \ad_{F_tx}=F_t\,\ad_x\,F_t^{-1}.
\end{equation}
Thus the representations $\ad\circ F_t$ and $\ad$ are equivalent.  The
weight set of $\ad\circ F_t$ is
\[
 \{\lambda\circ F_t:\lambda\in\Lambda\},
\]
whereas equivalent representations have the same finite set of weights.
Therefore
\[
 \{\lambda\circ F_t:\lambda\in\Lambda\}=\Lambda.
\]
For a fixed $\lambda$, the map
$t\mapsto\lambda\circ F_t$ is continuous and takes values in the finite
set $\Lambda$.  It is consequently constant.  At $t=0$ it equals
$\lambda$, so $\lambda\circ F_t=\lambda$ for all $t$.  Differentiating at
$t=0$ gives \eqref{eq:weight-annihilates-D}.

Finally, if every weight is zero, then every $\ad_x$ is strictly upper
triangular and hence nilpotent.  Engel's theorem implies that $\Lie$ is
nilpotent.
\end{proof}

\begin{remark}
The preceding proof uses the finite set of distinct weights, not a chosen
ordering of diagonal entries.  Repeated weights therefore cause no
permutation or differentiability issue.
\end{remark}

\subsection{Curvature on the abelianization}

\begin{lemma}\label{lem:curvature-abelianization}
For every $x\in K=\der^\perp$,
\begin{equation}\label{eq:curvature-abelianization}
 \pr_K\bigl(R^b(x,\overline x)x\bigr)
 =c\norm{x}^2x.
\end{equation}
\end{lemma}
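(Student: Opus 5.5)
The plan is to read off \eqref{eq:curvature-abelianization} from the reconstruction formula \eqref{eq:reconstruction}, tested against vectors of $K$. Both sides are linear in the projected vector, and $\pr_K$ is the orthogonal projection. So it suffices to show that for all $x,y\in K$
\[
 \ip{R^b(x,\overline x)x}{\overline y}=c\norm{x}^2\ip{x}{\overline y}.
\]
The case $x=0$ is trivial.

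First, fix a unitary basis $e_1,\ldots,e_n$ of $V$ adapted to the orthogonal splitting $V=K\oplus\der$. Both sides above are tensorial, so by multilinearity (complex linear in $x$ in the first and third slots, antilinear in the second and fourth) it is enough to verify the identity for the component $R^b_{i\bar j k\bar\ell}$ with all four indices in $K$. Equivalently, I will check that in \eqref{eq:reconstruction} every torsion term vanishes whenever $i,j,k,\ell$ all index vectors of $K$, and then contract with the coefficients of $x$ and $y$.

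Next, I examine the torsion terms one at a time.
\begin{enumerate}[label=\textup{(\roman*)}]
\item The $w$-term, $\sum_rT^r_{ik}\overline{T^r_{j\ell}}$, does not vanish componentwise. After contraction with $x_i x_k$ it becomes $\sum_r\ip{T(x,x)}{\overline{e_r}}\cdots$, which is zero because $T(x,x)=0$. So this term is killed by skew-symmetry only after contracting, and it is cleanest to do the contraction first.
\item The four $v$-terms each contain a torsion coefficient whose upper index is one of $j,\ell$, or the conjugate of one whose upper index is $i$ or $k$. Concretely these are $T^j_{ir}$, $T^\ell_{kr}$, $T^\ell_{ir}$ and $T^j_{kr}$. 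Each is the component along a vector of $K$ of a bracket $[e_\cdot,e_r]\in\der$. Since $K=\der^\perp$, all of these vanish.
\end{enumerate}

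Finally, what remains is the constant term. Contracted, it gives $\frac c2\bigl(\norm{x}^2\ip{x}{\overline y}+\ip{x}{\overline y}\norm{x}^2\bigr)=c\norm{x}^2\ip{x}{\overline y}$. Since this holds for every $y\in K$, we obtain \eqref{eq:curvature-abelianization}.

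The only delicate point is bookkeeping. One must confirm that each $v$-term really carries an upper index among the $K$-slots, and that the $w$-term is handled by contracting with $x$ in both holomorphic slots rather than componentwise. I expect no genuine obstacle beyond this index check.
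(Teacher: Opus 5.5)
Your proof is correct and is essentially the paper's argument. The paper normalizes $\norm{x}=1$ and picks a unitary basis with $e_1=x$ and an initial block spanning $K$, so the $w$-term dies at once because $T(e_1,e_1)=0$, and every $v$-term carries an output coefficient along $e_1$ or $e_s\in K$; this is your contraction argument, carried out in a frame adapted to $x$.

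One wording slip: your ``Equivalently'' sentence says that every torsion term vanishes componentwise when all indices lie in $K$. That is false for the $w$-term, as your item (i) itself notes. The argument is fine once you contract with $x_ix_k$ first, as you do.
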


\begin{proof}
The statement is homogeneous in $x$, so assume $\norm{x}=1$.  Choose a
unitary basis with $e_1=x$ and with an initial block spanning $K$.  Since
$K=\ker B$, Lemma~\ref{lem:A-B-properties} gives
\begin{equation}\label{eq:no-output-in-K}
 T^\alpha_{rs}=0
 \qquad\text{whenever }e_\alpha\in K.
\end{equation}
In words, no bracket has a component in $K$.

Let $e_s\in K$.  In \eqref{eq:reconstruction}, set
$i=j=k=1$ and $\ell=s$.  The first torsion term contains
$T(e_1,e_1)$ and vanishes.  Every remaining torsion term contains an
output coefficient in either the $e_1$-direction or the $e_s$-direction,
and hence vanishes by \eqref{eq:no-output-in-K}.  Therefore
\[
 R^b_{1\bar 1 1\bar s}=c\delta_{1s}.
\]
This is exactly
\[
 \pr_K\bigl(R^b(e_1,\overline{e_1})e_1\bigr)=ce_1.
\]
Rescaling proves \eqref{eq:curvature-abelianization}.
\end{proof}

\begin{proposition}\label{prop:nilpotent}
If $c\neq 0$ and $\Lie$ is solvable, then $\Lie$ is nilpotent.
\end{proposition}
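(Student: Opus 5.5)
The plan is to show that every adjoint weight of $\Lie$ vanishes, and then to apply Lemma~\ref{lem:weight-derivation}(iii). Fix a weight $\lambda\in\Lambda$. By Lemma~\ref{lem:weight-derivation}(i), $\lambda$ vanishes on $\der=[\Lie,\Lie]$. Since $V=\der\oplus K$ orthogonally, $\lambda$ is determined by its restriction to $K=\der^\perp$. It therefore suffices to show that $\lambda(x)=0$ for every $x\in K$.

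To do this I combine the derivation property with the curvature computation on the abelianization. Take $x\in K$. By Lemma~\ref{lem:curvature-derivations}, $D:=R^b(x,\overline x)|_V$ is a derivation of $\Lie$. Lemma~\ref{lem:weight-derivation}(ii) then gives $\lambda\circ D=0$, and in particular
\[
 \lambda\bigl(R^b(x,\overline x)x\bigr)=0.
\]
Now split $R^b(x,\overline x)x$ into its $K$-component and its $\der$-component. Because $\lambda$ kills $\der$, only the $K$-component contributes. Lemma~\ref{lem:curvature-abelianization} identifies that component as $c\norm{x}^2x$, so
\[
 0=\lambda\bigl(\pr_K(R^b(x,\overline x)x)\bigr)=c\norm{x}^2\lambda(x).
\]
Since $c\neq 0$, this forces $\lambda(x)=0$ for every nonzero $x\in K$.

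Hence $\lambda$ vanishes on both $K$ and $\der$, and so $\lambda=0$. As $\lambda$ was an arbitrary weight, every adjoint weight is zero, and Lemma~\ref{lem:weight-derivation}(iii) shows that $\Lie$ is nilpotent.

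I expect little genuine obstruction here, because the heavy lifting sits in the two preceding lemmas. The one point to watch is that $\lambda$ is applied to a vector that need not lie in $K$. This is handled by decomposing along $V=K\oplus\der$ and using $\lambda|_\der=0$; no invariance of $K$ under $D$ is required. The second point is that Lemma~\ref{lem:curvature-abelianization} is a statement about the diagonal curvature $R^b(x,\overline x)x$ only. That is exactly what is needed, since linearity of $\lambda$ means it suffices to test $\lambda$ on each $x\in K$ individually.
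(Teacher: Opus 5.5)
Your proposal is correct and follows essentially the paper's argument: apply the weight-derivation invariance $\lambda\circ D_x=0$ to $D_x=R^b(x,\overline x)$ for $x\in K$, use $\lambda|_{\der}=0$ together with the abelianization identity $\pr_K(D_xx)=c\norm{x}^2x$ to get $c\norm{x}^2\lambda(x)=0$, and conclude with Engel's theorem. The only difference is that you argue directly that every weight vanishes, whereas the paper assumes non-nilpotency and derives a contradiction from a single nonzero weight.
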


\begin{proof}
Assume that $\Lie$ is not nilpotent.  By
Lemma~\ref{lem:weight-derivation}, there is a nonzero adjoint weight
$\lambda$.  Since $\lambda$ vanishes on $\der$, its restriction to the
complementary space $K=\der^\perp$ is nonzero.  Choose $x\in K$ with
$\lambda(x)\neq 0$ and put
\[
 D_x=R^b(x,\overline x).
\]
By Lemma~\ref{lem:curvature-derivations}, $D_x\in\Der(\Lie)$, and
Lemma~\ref{lem:weight-derivation} gives
\begin{equation}\label{eq:weight-Dx-zero}
 \lambda(D_xx)=0.
\end{equation}
On the other hand, $\lambda$ vanishes on
$\der=K^\perp$, so Lemma~\ref{lem:curvature-abelianization} gives
\[
 \lambda(D_xx)
 =\lambda(\pr_KD_xx)
 =c\norm{x}^2\lambda(x),
\]
which is nonzero.  This contradicts \eqref{eq:weight-Dx-zero} and proves
that $\Lie$ is nilpotent.
\end{proof}

\section{Central-direction rigidity}\label{sec:center}

We now prove the decisive pointwise statement.  It does not require
solvability or nilpotency; only the existence of one nonzero central
direction is used.

\begin{lemma}\label{lem:central-direction}
Assume the reconstruction formula \eqref{eq:reconstruction}, and assume
that every curvature operator $D_{i\bar j}$ is a derivation of $\Lie$.
If $c\neq 0$ and $Z(\Lie)\neq 0$, then the bracket of $\Lie$ is zero.
\end{lemma}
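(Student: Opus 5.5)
The plan is to work at the given point in a unitary basis with $e_1=z$, where $z\in Z(\Lie)$ is a unit central vector, and to study the single derivation $D:=D_{1\bar 1}=R^b(z,\overline z)|_V$. Centrality means $T^r_{1s}=0$ for all $r,s$. Substituting $i=j=1$ into \eqref{eq:reconstruction} kills almost every torsion term. The $w$-term contains $T^r_{1k}$. Both $-3/4$ terms contain $T^1_{1r}$. The first $1/4$ term contains $T^\ell_{1r}$. The only survivor is the second $1/4$ term, which gives
\[
 R^b_{1\bar 1k\bar\ell}=\frac c2\bigl(\delta_{k\ell}+\delta_{1\ell}\delta_{k1}\bigr)+\frac14\sum_rT^1_{kr}\overline{T^1_{\ell r}} .
\]
Put $\varphi(x,y):=\ip{[x,y]}{\overline z}$, the $z$-component of the bracket, and let $Q$ be the nonnegative Hermitian operator with $Q_{k\bar\ell}=\sum_rT^1_{kr}\overline{T^1_{\ell r}}$. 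Then
\[
 D=\frac c2\bigl(\Id+z\otimes z^*\bigr)+\frac14 Q ,
\]
where $z\otimes z^*$ denotes the orthogonal projection onto $\C z$. Moreover $Qz=0$ because $\varphi(z,\cdot)=0$, so $Dz=cz$ and $D$ is Hermitian.

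\textbf{Stage 1: $z$ is orthogonal to $[\Lie,\Lie]$.} Pair the derivation identity $D[x,y]=[Dx,y]+[x,Dy]$ with $\overline z$. Since $D$ is Hermitian and $Dz=cz$, the left side is $c\,\varphi(x,y)$. On the right, the scalar part contributes $c\,\varphi(x,y)$. The projection part contributes nothing, because $\varphi(z,\cdot)=0$. The identity therefore reduces to
\[
 \varphi(Qx,y)+\varphi(x,Qy)=0\qquad\text{for all }x,y .
\]
In matrix form, with $\Phi=(T^1_{kr})$ antisymmetric and $Q$ built as $\Phi\Phi^*$ (up to transposition and conjugation conventions), this reads $Q^{\mathsf T}\Phi+\Phi Q=0$. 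I would contract this identity with $\overline{\Phi}$, i.e.\ take $\sum_{k,r}(\cdot)_{kr}\overline{T^1_{kr}}$. Each term should become a positive multiple of $\tr(Q^2)=\norm{Q}^2$, which forces $Q=0$ and hence $\Phi=0$. This means $T^1_{rs}=0$ for all $r,s$, so $z\perp[\Lie,\Lie]$, i.e.\ $z\in\ker B$.

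This contraction is the step I expect to be the main obstacle. The index bookkeeping has to be done carefully: conjugates, which slot of $T$ is contracted, and the sign from antisymmetry. The goal is to see that both terms come with the same sign and add, rather than cancel. If the direct trace is awkward, the fallback is to diagonalize $Q$ with eigenvalues $\mu_k\ge 0$. The identity then becomes $(\mu_k+\mu_r)\varphi(e_k,e_r)=0$. Hence $\varphi(e_k,e_r)=0$ unless $\mu_k=\mu_r=0$. Since $\ker Q$ consists exactly of the $x$ with $\varphi(x,\cdot)=0$, we also get $\varphi(e_k,e_r)=0$ when $\mu_k=\mu_r=0$, so $\varphi\equiv 0$ either way.

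\textbf{Stage 2: the whole bracket vanishes.} With $Q=0$ we now have $D=\frac c2(\Id+z\otimes z^*)$. For $x,y\perp z$, the derivation identity gives
\[
 \frac c2[x,y]+\frac c2\varphi(x,y)z=c[x,y].
\]
Since $\varphi\equiv0$ by Stage 1, this yields $\frac c2[x,y]=0$, so $[x,y]=0$ because $c\neq0$. Brackets involving $z$ already vanish, so the bracket of $\Lie$ is zero.
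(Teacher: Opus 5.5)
Your proposal is correct and follows the paper's proof essentially step for step: the same formula $D_z=\frac c2(\Id+P_z)+\frac14 C_z$ (your $Q$ is the paper's $C_z$), the same pairing with $\overline z$ giving $\Omega_z(C_zu,v)+\Omega_z(u,C_zv)=0$, the same eigenbasis argument (your fallback) to show $\Omega_z=0$, and the same scalar mismatch $\frac c2T=cT$ to kill the bracket. The trace-contraction route you preferred also works without sign trouble: write the identity as $Q\Phi+\Phi Q^{\mathsf T}=0$ with $Q=\Phi\Phi^*$; antisymmetry of $\Phi$ gives $\Phi^*\Phi=\overline Q$, so both contracted terms equal $\tr(Q^2)$ and add rather than cancel.
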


\begin{proof}
Choose a unit vector
\[
 0\neq z\in Z(\Lie).
\]
Then
\begin{equation}\label{eq:z-central-input}
 T(z,u)=0\qquad\text{for all }u\in V.
\end{equation}
Centrality eliminates $z$ as an input of the bracket, but it does not
exclude bracket values with a component in $\C z$.  We retain those
components explicitly.  Define the complex skew-symmetric bilinear form
\begin{equation}\label{eq:Omega-z}
 \Omega_z(u,v)=\ip{T(u,v)}{\overline z}
\end{equation}
and the nonnegative Hermitian endomorphism $C_z$ by
\begin{equation}\label{eq:C-z}
 \ip{C_zu}{\overline v}
 =\sum_r\Omega_z(u,e_r)\overline{\Omega_z(v,e_r)}.
\end{equation}
In a unitary basis with $e_1=z$,
\[
 (C_z)_{k\bar\ell}
 =\sum_rT^1_{kr}\overline{T^1_{\ell r}}.
\]
In particular,
\begin{equation}\label{eq:Cz-z-zero}
 C_zz=0.
\end{equation}

Set
\[
 D_z=R^b(z,\overline z).
\]
In \eqref{eq:reconstruction}, take $i=j=1$.  By
\eqref{eq:z-central-input}, every torsion term vanishes except the final
$1/4$ term.  Hence
\begin{equation}\label{eq:Dz-formula}
 D_z=\frac c2\Id+\frac c2P_z+\frac14C_z,
\end{equation}
where $P_z$ is the Hermitian orthogonal projection onto $\C z$.
Formula~\eqref{eq:Dz-formula} shows that $D_z$ is Hermitian self-adjoint
and, by \eqref{eq:Cz-z-zero},
\begin{equation}\label{eq:Dz-z}
 D_zz=cz.
\end{equation}

Since $D_z$ is a derivation,
\begin{equation}\label{eq:Dz-derivation}
 D_zT(u,v)=T(D_zu,v)+T(u,D_zv).
\end{equation}
Pair this identity with $\overline z$.  Using self-adjointness and
\eqref{eq:Dz-z}, the left-hand side becomes
\[
 \ip{D_zT(u,v)}{\overline z}
 =\ip{T(u,v)}{\overline{D_zz}}
 =c\Omega_z(u,v).
\]
After inserting \eqref{eq:Dz-formula} on the right-hand side, the two
scalar terms $c\Id/2$ contribute exactly $c\Omega_z(u,v)$.  The $P_z$
terms vanish by \eqref{eq:z-central-input}.  We obtain
\begin{equation}\label{eq:Omega-C-identity}
 \Omega_z(C_zu,v)+\Omega_z(u,C_zv)=0
\end{equation}
for all $u,v\in V$.

Because $C_z$ is nonnegative Hermitian and $C_zz=0$, choose a unitary
eigenbasis of $C_z$ containing $z$:
\[
 C_ze_i=\lambda_i e_i,
 \qquad
 \lambda_i\geq 0.
\]
Equation~\eqref{eq:Omega-C-identity} gives
\begin{equation}\label{eq:eigen-Omega}
 (\lambda_i+\lambda_j)\Omega_z(e_i,e_j)=0.
\end{equation}
On the other hand, by \eqref{eq:C-z},
\begin{equation}\label{eq:eigenvalue-Omega}
 \lambda_i=\sum_r\abs{\Omega_z(e_i,e_r)}^2.
\end{equation}
If $\Omega_z(e_i,e_j)\neq 0$, then
\eqref{eq:eigenvalue-Omega} and skew-symmetry imply
$\lambda_i>0$ and $\lambda_j>0$, contradicting
\eqref{eq:eigen-Omega}.  Therefore
\begin{equation}\label{eq:Omega-zero}
 \Omega_z=0,
 \qquad
 C_z=0.
\end{equation}
Thus no bracket has a component in the $z$-direction.

By \eqref{eq:Dz-formula} and \eqref{eq:Omega-zero},
\begin{equation}\label{eq:Dz-reduced}
 D_z=\frac c2(\Id+P_z).
\end{equation}
Since $P_zT(u,v)=0$, the left-hand side of
\eqref{eq:Dz-derivation} is
\[
 D_zT(u,v)=\frac c2T(u,v).
\]
Since $z$ is central,
\[
 T(P_zu,v)=T(u,P_zv)=0.
\]
Consequently, the right-hand side of \eqref{eq:Dz-derivation} is
\[
 T(D_zu,v)+T(u,D_zv)=cT(u,v).
\]
It follows that
\[
 \frac c2T(u,v)=cT(u,v).
\]
As $c\neq 0$, we conclude that $T(u,v)=0$ for all $u,v\in V$.
\end{proof}

\begin{remark}
	The first use of the derivation identity eliminates the possible
	\emph{output} of the bracket in the central direction; centrality alone
	eliminates only a bracket \emph{input}.  The second use of the derivation
	identity then eliminates the entire bracket.  This separation is
	necessary, for instance, in the complex Heisenberg algebra, where the
	center is precisely the image of the bracket.
\end{remark}

\begin{proposition}
	\label{prop:zero-torsion-algebra}
	Let $(M^n,J,g)$ be a balanced BTP Hermitian manifold with
	\[
	H^c\equiv0.
	\]
	Fix $p\in M$ and let
	\[
	\mathfrak g_p=(T^{1,0}_pM,T_p)
	\]
	be its torsion Lie algebra. Put
	\[
	\mathfrak r_p=\operatorname{Rad}(\mathfrak g_p),
	\qquad
	U_p=\mathfrak r_p^\perp.
	\]
	Then:
	
	\begin{enumerate}
		\item
		\[
		[U_p,\mathfrak r_p]=0,
		\qquad
		[U_p,U_p]\subset U_p.
		\]
		Consequently,
		\[
		\mathfrak g_p
		=
		\mathfrak r_p\oplus U_p
		\]
		is an orthogonal direct sum of commuting ideals, and $U_p$ is
		semisimple.
		
		\item
		\[
		Z(\mathfrak g_p)\perp
		[\mathfrak g_p,\mathfrak g_p].
		\]
		
		\item If $\mathfrak g_p$ is nilpotent, then it is abelian.
	\end{enumerate}
\end{proposition}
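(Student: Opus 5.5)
The plan is to rerun the pointwise arguments of Sections~\ref{sec:radical} and~\ref{sec:center} with $c=0$, keeping track of which conclusions survive. Those arguments used $c\neq 0$ only at the final step to reach a contradiction. Before that step they produce nonnegative combinations of torsion norms, and at $c=0$ these combinations must simply vanish. All inputs remain available: Lemma~\ref{lem:curvature-derivations}, Lemma~\ref{lem:balanced-unimodular}, and the reconstruction formula~\eqref{eq:reconstruction}, which holds for any constant $c$, including $c=0$.

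For (1), I would split into cases according to $p=\dim\mathfrak r_p$. If $p=n$, then $U_p=0$ and there is nothing to prove. If $p=0$, then $\mathfrak g_p$ is semisimple and $U_p=\mathfrak g_p$. If $0<p<n$, I apply Lemma~\ref{lem:mixed-identity} with $c=0$. Its proof used only the radical's invariance under derivations, balancedness, and~\eqref{eq:reconstruction}, and it yields $0=M+3N+2E$. Hence $M=N=E=0$.

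The condition $M=0$ says there are no components of type $U\wedge\mathfrak r\to\mathfrak r$. Since $U\wedge\mathfrak r\to U$ components were already excluded, this gives $[U_p,\mathfrak r_p]=0$. The condition $N=0$ kills $\Lambda^2U\to\mathfrak r$, so $[U_p,U_p]\subset U_p$. Thus $U_p$ is a subalgebra that commutes with $\mathfrak r_p$, hence an ideal. The projection $\mathfrak g_p\to\mathfrak g_p/\mathfrak r_p$ restricts to an isomorphism $U_p\cong\mathfrak g_p/\mathfrak r_p$, so $U_p$ is semisimple. Orthogonality of the decomposition holds by construction.

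For (2), I would take any unit $z\in Z(\mathfrak g_p)$ and repeat the first half of the proof of Lemma~\ref{lem:central-direction} with $c=0$. Formula~\eqref{eq:Dz-formula} becomes $D_z=\frac14C_z$, which is self-adjoint with $D_zz=0$. Pairing the derivation identity~\eqref{eq:Dz-derivation} with $\overline z$, the left side becomes $\langle T(u,v),\overline{D_zz}\rangle=0$. The right side is $\frac14\bigl(\Omega_z(C_zu,v)+\Omega_z(u,C_zv)\bigr)$. Hence~\eqref{eq:Omega-C-identity} holds verbatim. The eigenvalue argument~\eqref{eq:eigen-Omega}--\eqref{eq:eigenvalue-Omega} then forces $\Omega_z=0$, i.e.\ no bracket has a component along $z$. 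Since $z$ was an arbitrary central vector, this gives $Z(\mathfrak g_p)\perp[\mathfrak g_p,\mathfrak g_p]$.

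For (3), suppose $\mathfrak g_p$ is nilpotent but not abelian. Take the last nonzero term $\gamma_m$ of the lower central series; since the algebra is nonabelian, $m\ge 2$. As in Lemma~\ref{lem:basic-Lie-facts}(iv), $\gamma_m\subseteq Z(\mathfrak g_p)$, and since $m\geq2$ also $\gamma_m\subseteq[\mathfrak g_p,\mathfrak g_p]$. By (2), $\gamma_m$ is orthogonal to itself, so $\gamma_m=0$, which is a contradiction.

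The only step needing real care is confirming that the earlier computations do not secretly use $c\neq0$ before their final line. In Lemma~\ref{lem:mixed-identity} the constant enters only through the $c/2$ term. In the central computation, the cancellation of the $\frac c2\Id$ terms becomes trivial at $c=0$, and the $P_z$ term disappears because its coefficient is $c/2=0$. I expect both checks to go through directly.
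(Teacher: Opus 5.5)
Your proposal is correct and follows the paper's proof essentially step for step. In part (1) you use the mixed curvature identity at $c=0$ to get $M=N=E=0$; in part (2) you rerun the first half of the central-direction argument with $D_z=\frac14 C_z$ to get $\Omega_z=0$; and in part (3) you use the last nonzero term of the lower central series. Your explicit checks that $m\ge 2$ in the nonabelian case and that no step uses $c\neq 0$ before its final line are details the paper leaves implicit.
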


\begin{proof}
	If $\mathfrak r_p=0$ or $\mathfrak r_p=\mathfrak g_p$, the first
	statement is immediate. Suppose
	\[
	0<\dim_{\mathbb C}\mathfrak r_p<n.
	\]
	With the notation of Section~5, the mixed curvature identity at
	$c=0$ gives
	\[
	M+3N+2E=0.
	\]
	Since $M,N,E\ge0$, it follows that
	\[
	M=N=E=0.
	\]
	The equality $M=0$ gives
	\[
	[U_p,\mathfrak r_p]=0,
	\]
	while $N=0$ shows that the $\mathfrak r_p$-component of
	$[U_p,U_p]$ vanishes. Hence $U_p$ is a subalgebra, and the natural
	map
	\[
	U_p\longrightarrow
	\mathfrak g_p/\mathfrak r_p
	\]
	is a Lie algebra isomorphism. Therefore $U_p$ is semisimple and the
	first conclusion follows.
	
	For the second conclusion, let $z\in Z(\mathfrak g_p)$. Repeat the
	first part of the proof of Lemma~\ref{lem:central-direction}. The
	derivation identity gives
	\[
	\Omega_z(C_zu,v)+\Omega_z(u,C_zv)=0.
	\]
	Diagonalizing the nonnegative Hermitian operator $C_z$ shows that
	\[
	\Omega_z=0.
	\]
	Thus
	\[
	\langle [u,v],\overline z\rangle=0
	\]
	for all $u,v\in\mathfrak g_p$, proving
	\[
	Z(\mathfrak g_p)\perp
	[\mathfrak g_p,\mathfrak g_p].
	\]
	
	Finally, suppose that $\mathfrak g_p$ is nilpotent and nonabelian.
	The last nonzero term of its lower central series is a nonzero
	subspace of
	\[
	Z(\mathfrak g_p)\cap
	[\mathfrak g_p,\mathfrak g_p],
	\]
	contradicting the preceding orthogonality. Hence
	$\mathfrak g_p$ is abelian.
\end{proof}

\begin{corollary}
	Under the assumptions of Proposition~\ref{prop:zero-torsion-algebra},
	if the torsion Lie algebra is nilpotent at one point, then $T\equiv0$ and $R^c\equiv0$ on the connected component
	containing $p$.
	In particular, $g$ is flat Kähler on that connected component..
\end{corollary}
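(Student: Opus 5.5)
The corollary asserts that nilpotency of $\mathfrak g_p$ at one point forces $T\equiv0$ and $R^c\equiv0$ on the whole connected component. The plan has three steps: get $T_p=0$ pointwise, propagate it using parallelism, and then identify the curvature.

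\textbf{Step 1: torsion vanishes at $p$.} By part (3) of Proposition~\ref{prop:zero-torsion-algebra}, a nilpotent $\mathfrak g_p$ is abelian. So the bracket $T_p$ vanishes identically, i.e.\ $T_p=0$.

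\textbf{Step 2: propagate $T=0$.} Since $\nabla^bT=0$, the tensor $T$ is parallel along every curve for the Bismut connection. Parallel transport is a linear isomorphism on the relevant tensor bundle. Hence $T$ vanishes at every point joined to $p$ by a piecewise smooth curve, which is all of the connected component. On this component $T\equiv0$, so $g$ is Kähler there. In particular the Chern, Bismut and Levi-Civita connections coincide, and $R^c=R^b$ has all Kähler symmetries.

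\textbf{Step 3: curvature vanishes.} Apply the reconstruction formula \eqref{eq:reconstruction} with $T=0$ and $c=0$. It gives $R^b_{i\bar jk\bar\ell}=0$ in every unitary frame, so $R^c=R^b\equiv0$. Alternatively, Lemma~\ref{lem:polarization} with $c=0$ gives $\widehat R^c=0$, and the Kähler symmetries give $\widehat R^c=R^c$. Either way $R^c\equiv0$.

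Since $R^c$ is then the Riemannian curvature of the Kähler metric, $g$ is flat Kähler on that component. No step is a real obstacle. The only point needing care is Step 2: the vanishing of a parallel tensor propagates only along curves, so connectedness of the component is essential.
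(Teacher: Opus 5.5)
Your proof is correct and is essentially the paper's argument: part (3) of Proposition~\ref{prop:zero-torsion-algebra} gives $T_p=0$, the condition $\nabla^bT=0$ propagates this over the connected component, and your alternative in Step~3 (polarization at $c=0$ plus the K\"ahler symmetries, so $\widehat R^c=R^c=0$) is exactly how the paper concludes $R^c\equiv0$. Your primary Step~3 is an equally valid shortcut: substitute $T=0$ and $c=0$ into \eqref{eq:reconstruction} and use $\nabla^b=\nabla^c$ when $T=0$.
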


\begin{proof}
	Nilpotency implies that the torsion bracket at that point is
	abelian, hence $T=0$ there. Since $\nabla^bT=0$, it follows that
	$T\equiv0$ on the connected component. Thus $g$ is Kähler, and
	$H^c\equiv0$ then implies $R^c\equiv0$ by polarization.
\end{proof}

\section{Proof of the main theorem and consequences}\label{sec:conclusion}

\begin{proof}[Proof of Theorem~\ref{thm:main}]
Fix an arbitrary point $p\in M$ and let
\[
 \Lie_p=(T^{1,0}_pM,T_p)
\]
be its torsion Lie algebra.  Proposition~\ref{prop:solvable} shows that
$c\neq 0$ forces $\Lie_p$ to be solvable.  Proposition~\ref{prop:nilpotent}
then shows that $\Lie_p$ is nilpotent.

If the bracket $T_p$ were nonzero, then $\Lie_p$ would be a nonzero
nilpotent Lie algebra and hence would have nonzero center by
Lemma~\ref{lem:basic-Lie-facts}.  The central-direction lemma would then
force the bracket to vanish, a contradiction.  Thus $T_p=0$.  Since $p$
was arbitrary,
\[
 T\equiv 0.
\]

For completeness, in local holomorphic coordinates one has
\[
 T^k_{ij}=g^{k\bar\ell}
 \bigl(\partial_i g_{j\bar\ell}-\partial_jg_{i\bar\ell}\bigr).
\]
Hence $T=0$ is equivalent to $\partial\omega=0$.  Since $\omega$ is real,
$\bar\partial\omega=0$ as well, and therefore $d\omega=0$.  Thus $g$ is
K\"ahler.
\end{proof}

\begin{proof}[Proof of Corollary~\ref{cor:all-BTP}]
If $g$ is balanced, apply Theorem~\ref{thm:main}.  If $g$ is non-balanced, the existence of
constant Chern holomorphic sectional curvature is ruled out by
\cite[Theorem~1]{ChenZhengBTP}. Hence $g$ must be balanced, and the
result follows.
\end{proof}

\begin{corollary}\label{cor:complete-space-form}
Under the assumptions of Theorem~\ref{thm:main}, suppose in addition
that $(M,g)$ is connected and complete.  Then its universal cover is a
complex space form: a suitably scaled complex projective space if
$c>0$, and a suitably scaled complex hyperbolic space if $c<0$.
\end{corollary}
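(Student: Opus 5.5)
By Theorem~\ref{thm:main}, $T\equiv 0$, so $g$ is K\"ahler. The plan is to reduce to the classical uniformization of complete K\"ahler manifolds with constant holomorphic sectional curvature.

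First, once $g$ is K\"ahler, the Chern and Levi-Civita connections coincide. Hence $R^c$ is the Riemannian curvature tensor, and it has the full K\"ahler symmetries $R_{i\bar jk\bar\ell}=R_{k\bar ji\bar\ell}=R_{i\bar\ell k\bar j}$. The symmetrization \eqref{eq:symmetrization} is then the identity, so Lemma~\ref{lem:polarization} gives
\[
 R_{i\bar jk\bar\ell}=\frac c2\bigl(\delta_{ij}\delta_{k\ell}+\delta_{i\ell}\delta_{kj}\bigr)
\]
at every point. This is exactly the curvature tensor of a complex space form with holomorphic sectional curvature $c$. In particular $\nabla R=0$ and the metric is locally symmetric.

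Next, pass to the universal cover $\widetilde M$ with the pulled-back metric. It is a connected, simply connected, complete K\"ahler manifold whose curvature tensor is pointwise that of the model space $\mathbb{CP}^n$ with the Fubini--Study metric (if $c>0$) or $\mathbb{CH}^n$ with the Bergman metric (if $c<0$), scaled so that the holomorphic sectional curvature equals $c$. The Cartan--Ambrose--Hicks theorem, or equivalently the standard theorem on complete simply connected Riemannian manifolds with parallel curvature, produces an isometry from $\widetilde M$ onto the model. Choose a unitary-frame identification $T_p\widetilde M\cong T_o(\text{model})$ that intertwines $J$ and the curvature tensors. Since $J$ is parallel on both sides and is preserved by parallel transport along corresponding geodesics, the resulting map is holomorphic. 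Alternatively, one can simply cite the classical statement (Kobayashi--Nomizu, vol.~II, or Hawley/Igusa) that a complete simply connected K\"ahler manifold of constant holomorphic sectional curvature $c$ is holomorphically isometric to $\mathbb{CP}^n$, $\C^n$, or $\mathbb{CH}^n$ according to the sign of $c$. Since $c\neq 0$, the flat case $\C^n$ is excluded.

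The main subtle point is not analytic. It is making sure the polarization step truly yields the full curvature tensor. This relies on the K\"ahler symmetries, which hold only after $T=0$ has been established by Theorem~\ref{thm:main}. Everything else is a citation of the classical uniformization result.
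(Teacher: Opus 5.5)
Your proposal is correct and follows the same route as the paper. The paper uses Theorem~\ref{thm:main} to get K\"ahlerness and then cites the classical classification of complete K\"ahler manifolds of constant holomorphic sectional curvature (Kobayashi--Nomizu); your polarization step, which uses the K\"ahler symmetries to recover the full curvature tensor, and your Cartan--Ambrose--Hicks argument simply unpack that citation, and you correctly use $c\neq 0$ to exclude $\C^n$.
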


\begin{proof}
Theorem~\ref{thm:main} makes $g$ K\"ahler.  The conclusion is then the standard classification of complete K\"ahler
manifolds with constant holomorphic sectional curvature \cite{KN}.
\end{proof}

\subsection{The algebraic rigidity theorem}

The geometric proof can be separated completely from differential
geometry once the two BTP inputs have been supplied.

\begin{theorem}\label{thm:algebraic-rigidity}
Let $(V,\ip{\,}{\,})$ be an $n$-dimensional Hermitian vector space, and
let $\mu\in\Lambda^2V^*\otimes V$ satisfy the following conditions:
\begin{enumerate}[label=\textup{(\roman*)}]
\item $\mu$ is a complex Lie bracket;
\item $\tr(\ad_x)=0$ for every $x\in V$;
\item for a real number $c$, choose a unitary basis and define endomorphisms
      $D_{i\bar j}$ by the right-hand side of \eqref{eq:reconstruction},
      replacing $T$ by $\mu$; every $D_{i\bar j}$ is a derivation of
      $(V,\mu)$.  (Equivalently, this holds in every unitary basis.)
\end{enumerate}
If $c\neq 0$, then $\mu=0$.
\end{theorem}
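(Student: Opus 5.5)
The plan is to observe that the proof of Theorem~\ref{thm:main} in Sections~\ref{sec:operators}--\ref{sec:center} never used differential geometry once three inputs were available at the point $p$. Those inputs are: the Lie algebra structure (Corollary~\ref{cor:torsion-Lie-algebra}), unimodularity (Lemma~\ref{lem:balanced-unimodular}), and the fact that the operators $D_{i\bar j}$ given by \eqref{eq:reconstruction} are derivations (Lemma~\ref{lem:curvature-derivations} together with Proposition~\ref{prop:reconstruction}). Hypotheses (i)--(iii) supply exactly these, with $T$ replaced by $\mu$ and $\eta_k=\sum_i\mu^i_{ik}=-\tr(\ad_{e_k})=0$ by (ii). So I would rerun the same three stages verbatim in $(V,\mu)$, checking at each step which identities are invoked.

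First, I would settle the parenthetical remark. The right-hand side of \eqref{eq:reconstruction} is built from $\delta$'s and contractions $\sum_r$ of $\mu\otimes\overline\mu$ with index pattern $(i,\bar j,k,\bar\ell)$. It is therefore the component expression of a well-defined tensor in $V\otimes\overline V\otimes V\otimes\overline V$, and a unitary change of basis transforms it covariantly. Consequently the operator $D(x,\overline y)$ depends sesquilinearly on $(x,y)$ and is basis-independent. Since derivations form a linear space, the condition in one unitary basis implies it in every unitary basis. It also implies that $D(x,\overline x)$ is a derivation for arbitrary $x$, which is what Lemmas~\ref{lem:curvature-abelianization} and \ref{lem:central-direction} use with $x$ or $z$ as the first basis vector.

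Next come the three stages. Lemma~\ref{lem:A-B-properties} is pure linear algebra. Lemma~\ref{lem:S-formula} uses only \eqref{eq:reconstruction} and $\eta=0$, and it gives $S=\sum_iD_{i\bar i}\in\Der$ with $\tr S=n(n+1)c/2$. Lemmas~\ref{lem:mixed-identity} and \ref{lem:quotient-trace} use radical invariance under derivations (Lemma~\ref{lem:basic-Lie-facts}), which gives $D_{\alpha\bar a}$ preserving $\rad$. They also use \eqref{eq:reconstruction}, unimodularity, and tracelessness of derivations of semisimple algebras. Hence Proposition~\ref{prop:solvable} holds. Lemma~\ref{lem:weight-derivation} is purely Lie-theoretic, and Lemma~\ref{lem:curvature-abelianization} uses only \eqref{eq:reconstruction} and $\ker B=\der^\perp$, so Proposition~\ref{prop:nilpotent} holds. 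Finally, Lemma~\ref{lem:central-direction} is already stated under exactly these algebraic hypotheses. So if $\mu\neq0$, then nilpotency and Lemma~\ref{lem:basic-Lie-facts}(iv) give a nonzero center, forcing $\mu=0$, which is a contradiction.

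The main point needing care is not a new estimate but the basis-independence claim in the first step. It legitimizes choosing adapted unitary bases: $\rad\oplus U$ in Section~\ref{sec:radical}, $e_1=x\in K$ in Section~\ref{sec:weights}, and $e_1=z$ in Section~\ref{sec:center}. In the geometric setting this was automatic, because $R^b$ is a tensor. Here I would verify it directly by writing each term of \eqref{eq:reconstruction} invariantly, for instance $\sum_r\mu^r_{ik}\overline{\mu^r_{j\ell}}=\ip{\mu(e_i,e_k)}{\overline{\mu(e_j,e_\ell)}}$, and similarly for the $v$-terms via $\ip{\mu(e_i,e_r)}{\overline{e_j}}$ summed against a unitary basis in $r$. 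Such sums are invariant because $\sum_r e_r\otimes\overline{e_r}$ is.
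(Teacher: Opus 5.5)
Your proposal is correct and follows the same argument as the paper. The paper's proof reruns the contracted-derivation, radical, weight, and central-direction steps of Sections~\ref{sec:contracted}--\ref{sec:center} verbatim, with $T$ replaced by $\mu$ and balancedness replaced by hypothesis (ii).

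Your check that the right-hand side of \eqref{eq:reconstruction} defines a sesquilinear tensor, so that the derivation property passes to every $D(x,\overline y)$ and to adapted unitary bases, supplies a justification the paper only asserts in the parenthetical of (iii). In the central-direction step it is the reality of $c$ that makes $D_z$ self-adjoint.
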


\begin{proof}
The proof is exactly the finite-dimensional argument in
Sections~\ref{sec:contracted}--\ref{sec:center}.  The contracted
curvature derivation and radical calculation make $(V,\mu)$ solvable;
the weight argument makes it nilpotent; and the central-direction lemma
eliminates every nonzero nilpotent bracket.
\end{proof}

\subsection{Check the three-dimensional models}

The three types appearing in the classification of compact balanced
non-K\"ahler BTP threefolds \cite{ZhaoZhengThreefold} provide useful
consistency checks for the three stages of the proof.

In the full-rank case, a unitary frame has
\[
 [e_2,e_3]=ae_1,
 \qquad
 [e_3,e_1]=ae_2,
 \qquad
 [e_1,e_2]=ae_3,
 \qquad a>0.
\]
The torsion algebra is isomorphic to $\mathfrak{sl}_2(\C)$ and is
excluded by the semisimple trace argument.

In the rank-two case, one has
\[
 [e_1,e_3]=ae_1,
 \qquad
 [e_2,e_3]=-ae_2.
\]
This algebra is solvable but not nilpotent and is excluded by the adjoint
weight argument.

In the rank-one case,
\[
 [e_2,e_3]=ae_1.
\]
This is the complex Heisenberg algebra.  Its center is spanned by $e_1$,
and it is excluded by the central-direction lemma.  Thus the pointwise
argument recovers the low-dimensional case division without invoking the
global classification.

\end{document}